\newtheorem{thm}{Theorem} %[section]
\newtheorem{lem}[thm]{Lemma}
\newtheorem{claim}[thm]{Claim}
\newtheorem{ex}[thm]{Example}
\newtheorem{question}[thm]{Question}
\newtheorem*{question*}{Question}
\renewenvironment{proof}[1][]{\begin{trivlist}
\item[\hspace{\labelsep}{\bf\noindent Proof#1.\/}] }{\qed\end{trivlist}}
\DeclareMathOperator{\tr}{tr}
\newcommand{\A}{\mathcal{A}}
\newcommand{\B}{\mathcal{B}}
\newcommand{\C}{\mathcal{C}}
\newcommand{\D}{\mathcal{D}}
\newcommand{\E}{\mathcal{E}}
\newcommand{\eps}{\epsilon}
\newcommand{\lam}{\lambda}
\newcommand{\eqS}{\: = \:}
\newcommand{\leS}{\: \le \:}
\newcommand{\geS}{\: \ge \:}
\newcommand{\addspace}[1]{
\addtolength{\jot}{4pt}
#1
}
\title{Eigenvalues of subgraphs of the cube}
\author{B\'ela Bollob\'as \thanks{
        Department of Pure Mathematics and Mathematical Statistics, 	
        University of Cambridge, 
        Wilberforce Road, 
        CB3\;0WB Cambridge, 
        UK;
        e-mail:
        \mbox{\{\texttt{B.Bollobas,S.Letzter}\}\texttt{@dpmms.cam.ac.uk}}\,.
    }
    \thanks{
        Department of Mathematical Sciences, 
        University of Memphis, 
        Memphis TN 38152, 
        USA
        and
        London Institute for Mathematical Sciences, 
        35a South St., 
        Mayfair, 
        London W1K 2XF,
        UK.
    }
    \thanks{
        Research supported in part by NSF grant DMS-1301614 and EU MULTIPLEX
        grant 317532.
    }
    \and
    Jonathan Lee \thanks{
        Mathematical Institute, 
        University of Oxford, 
        Woodstock Road, 
        Oxford, 
        OX2 6GG, 
        United Kingdom.
        e-mail: 
        \mbox{\texttt{jonathan.lee@maths.ox.ac.uk}}
    }
    \and
    Shoham Letzter \footnotemark[1]
}
\begin{document}
\maketitle
\begin{abstract}
    \setlength{\parindent}{0in} 
    \setlength{\parskip}{.08in} 
    \noindent
    We consider the problem of maximising the largest eigenvalue of subgraphs of
    the hypercube $Q_d$ of a given order.  We believe that in most cases,
    Hamming balls are maximisers, and our results support this belief.  We show
    that the Hamming balls of radius $o(d)$ have largest eigenvalue that is
    within $1 +o(1)$ of the maximum value. We also prove that Hamming balls
    with fixed radius maximise the largest eigenvalue exactly, rather than
    asymptotically, when $d$ is sufficiently large.  Our proofs rely on the
    method of compressions. 
    \setlength{\parskip}{.1in} 
\end{abstract}

\section{Introduction}
    In the last few decades much research has been done on spectra of graphs,
    i.e.~the eigenvalues of the adjacency matrices of graphs; see Finck and
    Grohmann~\cite{FG},  Hoffman~\cite{hof1, hof2}, Nosal~\cite{nos},
    Cvetkovi\'c,  Doob and Sachs~\cite{CDS}, Neumaier~\cite{Neum}, Brigham and
    Dutton~\cite{brig1, brig2}, Brualdi and Hoffman~\cite{BH},
    Stanley~\cite{stan}, Shearer~\cite{she}, Powers~\cite{pow}, Favaron,
    Mah\'eo and Sacl\'e~\cite{FMS1, FMS2}, Hong~\cite{hong}, Liu, Shen and
    Wang~\cite{LSW},  Nikiforov~\cite{nik-walks, nik-bounds1, nik-bounds2,
    nik-extr}, and Cvetkovi\'c, Rowlinson and  Simi\'c~\cite{CRS} for a small
    selection of relevant publications. Perhaps the most basic property of the
    spectrum of a graph is its radius, i.e.~the maximal eigenvalue: this has
    received especially much attention. Here we shall mention a small handful of
    these results.

    In what follows, $A(G)$ denotes the adjacency matrix of a graph $G$ and
    $\lam_1(G)$ denotes the largest eigenvalue of $A(G)$. As usual, we write
    $e(G)$ for the number of edges, $\Delta(G)$ for the maximal degree and
    $\overline{d}(G)$ for the average degree. Trivially, $\overline{d}(G) \le
    \lambda_1(G) \le \Delta(G)$; in particular, if $G$ is $d$-regular then
    $\lambda_1(G)=d$. In 1985, Brualdi and Hoffman~\cite{BH} gave an upper bound
    on $\lambda_1(G)$ in terms of $e(G)$: if $e(G)\le \binom{k}{2}$ for some
    integer $k\ge 1$ then $\lambda_1(G) \le k-1$, with equality iff $G$ consists
    of a $k$-clique and isolated vertices. Extending this result,
    Stanley~\cite{stan} showed that if $e(G)=m$ then $\lambda_1(G)\le
    \frac{1}{2} \left(-1+\sqrt{8m+1}\right)$, with equality only as before. In
    1993, Favaron, Mah\'eo and Sacl\'e~\cite{FMS2} published an upper bound on
    $\lambda_1(G)$ in terms of the local structure of $G$: writing $s(G)$ for
    the maximum of the sum of degrees of vertices adjacent to some vertex, we
    have $\lambda_1(G)\le \sqrt{s(G)}$. Furthermore, if $G$ is connected then
    equality holds iff $G$ is regular or bipartite semi-regular (i.e.~vertices
    in the same class have equal degrees). In particular, if $G$ is a
    triangle-free graph with $m$ edges then $s(G)\le m$, so $\lambda_1(G)\le
    \sqrt{m}$. This inequality was first proved by Nosal~\cite{nos} in 1970. The
    star $K_{1,m}$ shows that this inequality is best possible. 

    Our main aim in this paper is to study the maximal eigenvalue of induced
    subgraphs of the cube $Q_d$ on $2^d$ vertices, rather than general graphs
    restricted by their parameters like order and size. To be precise, our aim
    is to give a partial answer to the following question posed by
    Fink~\cite{fink} and in a weaker form by Friedman and Tillich \cite{ft}. 
    \begin{question}
        Given $m$, $1\le m \le 2^d$, what is the maximum of
        the maximal eigenvalue of $Q_d[U]$, where $|U|=m$?
    \end{question}
    This problem can be
    viewed as a variant of the `classical' isoperimetric problem in the cube.
    Indeed, since $Q_d$ is $d$-regular, the problem of bounding the maximal
    eigenvalue of the subgraph $Q_d[U]$ of $Q_d$ induced by a set $U \subset
    V(Q_d)=\{0,1\}^d$ is closely related the the size of the edge boundary of
    $U$, the set of edges joining a vertex in $U$ to one not in $U$. If the
    maximal eigenvalue of $Q_d[U]$ is $\lambda_1$, then $e\left(Q_d[U]\right)
    \le \lambda_1 |U|/2$, so the size of the edge boundary of $U$ is at least
    $(d-\lambda_1)|U|$. Thus, if $\lambda_1\le \lambda(m)$ whenever $|U|=m$,
    then for every set of $m$ vertices of the cube $Q_d$ the edge boundary has
    size at least $(d-\lambda(m))m$.

    The study of eigenvalues as a form of isoperimetric inequality is not new:
    in 1985, Alon and Milman \cite{alon-milman} showed that there is a close
    relation between the second smallest eigenvalue of the Laplacian of a graph
    and some expansion properties of the graph. The nature of our problem is
    very different from this. A vaguely related problem has been studied by
    Reeves, Farr, Blundell, Gallagher and Fink~\cite{reeves}.

    Before we state our results, we give some precise definitions. Our ground
    graph is taken to be $Q_d$, the \emph{$d$-dimensional hypercube}, where the
    vertices are labelled by the $0, 1$ strings of length $d$, so that
    $V(Q_d)=\{0,1\}^d$. Two vertices are connected by an edge if they differ in
    exactly one coordinate.  We shall often use the obvious correspondence
    between binary strings of length $d$ and subsets of $[d]$ in which a subset
    corresponds to its characteristic function.  A \emph{subcube} of $Q_d$ of
    dimension $i$ is the graph induced by a subset of the vertices obtained by
    fixing the values of all but $i$ coordinates.  The \emph{Hamming ball}
    $H_d^i$ is the subgraph of $Q_d$ induced by the vertices with at most $i$
    ones in their strings.  We note that the subgraphs minimising the sizes of
    the vertex and edge boundaries among all subgraphs of $Q_d$ with a given
    order are well known.  In particular, Harper (see \cite{isoperimetric1} and
    \cite{isoperimetric2}) showed in 1966 that the Hamming balls  minimise the
    size of the vertex boundary among subgraphs of the same order. In 1976, Hart
    \cite{isoperimetric3} proved a similar result, showing that subcubes
    minimise the size of the edge boundary among subgraphs of the hypercube of
    the same order. 

    As the problem of maximising $\lam_1$ is a form of an isoperimetric problem,
    it seems natural to believe that either Hamming balls or subcubes should be
    maximisers of $\lam_1$. Despite the connection between $\lam_1$ and the edge
    boundary, we believe that in many cases, the task of maximising $\lam_1$ is
    related to minimising the vertex boundary.  More precisely, we believe that
    for most radii sufficiently smaller than $d/2$, Hamming balls maximise $\lam_1$. 
    
    We prove several results in this direction.  Our first
    result, which is relatively easy, gives a precise answer when the number of
    vertices is at most the dimension of the hypercube.
    \begin{restatable}{thm}{thmStar} \label{thm:star}
        Let $G$ be an induced subgraph of $Q_d$ with $n\le d$ vertices. Then for
        $n\ge 103$, $\lambda_1(G)\le \sqrt{n-1}$ with equality if and only if
        $G$ is a star.
    \end{restatable}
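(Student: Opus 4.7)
Since $G$ is an induced subgraph of the bipartite graph $Q_d$, it is triangle-free. My plan is to combine Nosal's inequality ($\lambda_1(H)\le\sqrt{e(H)}$ for triangle-free $H$, with equality iff $H$ is a complete bipartite graph $K_{a,b}$ together with isolated vertices) with the cube constraint that any two vertices of $Q_d$ share at most two common neighbours. The latter implies that, for $n\ge 5$, the only induced complete bipartite subgraphs of $Q_d$ on $n$ vertices are stars $K_{1,n-1}$. Thus if $e(G)\le n-1$ we conclude $\lambda_1(G)\le\sqrt{n-1}$, and equality in Nosal forces $G=K_{1,n-1}$.

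The remaining case is $e(G)\ge n$, in which $G$ necessarily contains a $4$-cycle. Here I would apply the Favaron--Mah\'eo--Sacl\'e bound $\lambda_1(G)^2\le s(G):=\max_v\sum_{u\sim v}d_G(u)$ and bound $s(G)$ using the cube. For $v$ of degree $k$, the ``other'' neighbours of the elements of $N_G(v)$ are vertices of $V(G)$ at $Q_d$-distance exactly $2$ from $v$, and each contributes at most $2$ since two vertices of $Q_d$ have at most two common neighbours. A careful accounting then yields $T(v)\le(n-1)+X_2(v)$, where $X_2(v)$ counts $Q_d$-distance-$2$ vertices of $V(G)$ both of whose midpoints to $v$ lie in $V(G)$; equivalently, $X_2(v)$ is the number of $4$-cycles of $G$ through $v$ divided by a constant.

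The main obstacle is then ruling out large $X_2(v)$ together with $\lambda_1(G)\ge\sqrt{n-1}$ under the hypotheses $n\ge 103$ and $n\le d$. I would split on the maximum degree $\Delta$. If $\Delta\le\sqrt{n-1}$, the trivial bound $\lambda_1\le\Delta$ suffices, and equality can be shown to force a regular structure incompatible with $n\ge 103$ and $n\le d$. Otherwise, focusing on a vertex $v$ of degree $\Delta>\sqrt{n-1}$, one obtains an induced star $K_{1,\Delta}$ around $v$; the additional vertices needed to create a $4$-cycle through $v$ can then be analysed via Perron-eigenvector estimates, using $n\le d$ to bound the number of active coordinates of $G$. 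I expect the numerical threshold $n\ge 103$ to be precisely what is needed to quantitatively defeat the small violating configurations (such as $Q_3, Q_4, C_4$, and small ``books'' of $C_4$'s sharing an edge with a star) which genuinely achieve $\lambda_1\ge\sqrt{n-1}$ for small $n$.
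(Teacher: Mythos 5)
Your first case ($e(G)\le n-1$) is fine: Nosal's bound plus the fact that $Q_d$ contains no $K_{2,3}$ does force the equality graph to be a star once $n\ge 5$. But the heart of the theorem is the case $e(G)\ge n$, and there your plan rests on the Favaron--Mah\'eo--Sacl\'e bound $\lambda_1^2\le s(G)$, which is provably too weak here. This is exactly the dead end the paper flags: take a vertex $u$, its $k$ neighbours, and $\binom{k}{2}$ vertices at distance $2$ each joined to two neighbours of $u$. This lives in $Q_d$ with $n=1+k+\binom{k}{2}\le d$ vertices, and in your notation $X_2(u)=\binom{k}{2}\approx n$, so your estimate $s(G)\le (n-1)+X_2(v)$ only yields $\lambda_1\le\sqrt{2n}$, roughly $\sqrt2$ times the target $\sqrt{n-1}$. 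No accounting of $s(G)$ can close this gap, because $s(G)$ genuinely is about $2n$ for such configurations while their true $\lambda_1$ is below $\sqrt{n-1}$; the bound, not the configuration, is the obstruction. The remaining steps you propose (``Perron-eigenvector estimates'', bounding ``active coordinates'' via $n\le d$, and the expectation that $n\ge 103$ will ``defeat the small violating configurations'') are not arguments but placeholders for the missing idea, and the equality characterisation is likewise unproven in this case since you would need strict inequality there.

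What the missing idea has to supply is a fourth-moment (or stronger) control: the paper bounds $\lambda_1^4\le \frac12\tr A^4$, i.e.\ edges plus twice the number of paths of length $2$ plus four times the number of $C_4$'s, and then uses two independent counts of the $4$-cycles --- at most $\binom{l}{2}$ of them, where $l$ is the smaller side of the bipartition (by $K_{2,3}$-freeness), and at most $\frac{n}{4}\binom{\log_2 n}{2}$ of them by the compression/initial-segment lemma of Bollob\'as--Radcliffe, which also gives $e(G)\le\frac{n}{2}\log_2 n$. Playing these two bounds on $l$ against each other ($l\ge\frac{2n-1}{7}$ versus $l=O(\sqrt{n}\log n)$) is what produces the contradiction for $n\ge 103$; your plan contains neither the walk-counting step nor any quantitative use of $n\le d$ comparable to the subcube-counting lemma, so as written it does not prove the statement.
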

    We note that the conclusion of Theorem \ref{thm:star} does not hold for all
    $n$.  Indeed, for $n = 4$, the largest eigenvalue of $Q_2$ (or $C_4$) is
    $2$, which is larger than $\sqrt{3}$, the largest eigenvalue of the star
    $K_{1, 3}$.

    In order to obtain more general results we evaluate the largest eigenvalue
    of the Hamming ball $H_d^i$ for radii tending to infinity with the dimension
    of the cube. 
    \begin{restatable}{thm}{thmHammingEval}\label{thm:hamming-eval} 
        If $d, i \rightarrow \infty$ and $i \leq \frac{d}{2}$ then
        $$\lambda_1(H_d^i) \eqS 2\sqrt{i(d+1-i)} \left(
        1+O\left(i^{-\frac{1}{2}}\log^\frac{1}{2}i\right)\right).$$
    \end{restatable}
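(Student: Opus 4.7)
The plan is to use the natural $S_d$-symmetry of the Hamming ball to reduce the problem to the largest eigenvalue of an explicit tridiagonal matrix, and then to sandwich it between matching upper and lower bounds.

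Since $H_d^i$ is connected and $S_d$-invariant, the Perron--Frobenius eigenvector of $A(H_d^i)$ is the unique positive eigenvector (up to scaling), and is therefore constant on each level $L_k = \{v : |v|=k\}$ for $0 \leS k \leS i$. Writing $y_k$ for this common value and setting $z_k = \sqrt{\binom{d}{k}}\,y_k$, the eigenvalue equation $\lam y_k = (d-k)y_{k+1} + ky_{k-1}$ (with $y_{-1} = y_{i+1} = 0$) symmetrises to $\lam z_k = b_k z_{k+1} + b_{k-1} z_{k-1}$ with $b_k \eqS \sqrt{(k+1)(d-k)}$. Thus $\lam_1(H_d^i) = \lam_1(T)$, where $T$ is the $(i+1)\times(i+1)$ symmetric tridiagonal matrix with zero diagonal and off-diagonal entries $b_0,\ldots,b_{i-1}$. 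Since $i \leS d/2$, the sequence $(b_k)$ is increasing on $\{0,\ldots,i-1\}$ with maximum $b_{i-1} = \sqrt{i(d+1-i)}$, so writing $T = S + S^T$ for the weighted shift $S$ with $Se_k = b_k e_{k+1}$ (of operator norm $b_{i-1}$) immediately gives the upper bound
\begin{equation*}
    \lam_1(T) \: \leS \: 2 b_{i-1} \: \eqS \: 2\sqrt{i(d+1-i)}.
\end{equation*}

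For the matching lower bound I would use a Chebyshev-type test vector supported on the top $L+1$ levels, namely $v_k = \sin\bigl((k-i+L+1)\pi/(L+2)\bigr)$ for $k \in \{i-L,\ldots,i\}$ (and zero elsewhere). The standard identities $\sum_{j=1}^{L+1}\sin^2(j\theta) = (L+2)/2$ and $\sum_{j=1}^{L}\sin(j\theta)\sin((j+1)\theta) = \tfrac{1}{2}(L+2)\cos\theta$ with $\theta = \pi/(L+2)$, together with the monotonicity bound $b_k \geS b_{i-L}$ on the support, yield
\begin{equation*}
    \lam_1(T) \: \geS \: \frac{v^T T v}{v^T v} \: \geS \: 2\,b_{i-L}\cos\!\bigl(\pi/(L+2)\bigr).
\end{equation*}
A short calculation gives $b_{i-L}/b_{i-1} \geS 1 - O(L/i)$ uniformly for $i \leS d/2$ (by expanding the ratio $(i-L+1)(d-i+L)/(i(d-i+1))$), while $\cos(\pi/(L+2)) \geS 1 - O(L^{-2})$. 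Choosing $L \asymp (i\log i)^{1/2}$ balances the two error contributions and delivers the desired rate $O(i^{-1/2}\log^{1/2}i)$.

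The only real obstacle is managing the trade-off between the two sources of error in the lower bound: the window must be large enough for the Chebyshev factor $1 - \cos(\pi/(L+2))$ to be small, yet small enough that $b_k$ stays near $b_{i-1}$ across the support. The bound $b_{i-L}/b_{i-1} \geS 1-O(L/i)$ needs to be verified uniformly, and the worst case is $d \gg i$, where the factor $(i-L+1)/i$ drives the error; the estimate is sharper in the balanced regime $d \asymp 2i$. Everything else is routine symmetry reduction combined with the Chebyshev trigonometric identities.
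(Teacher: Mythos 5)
Your proof is correct, and the lower bound takes a genuinely different route from the paper's. Both proofs share the same upper bound $2\sqrt{i(d+1-i)}$ --- the paper proves it via a decomposition of $H_d^i$ into bipartite layers (Claim~\ref{claim:trivial-bound-ev}), while you package the same estimate as $\|S\| + \|S^T\|$ for the weighted shift $S$ after the symmetrisation $z_k = \sqrt{\binom{d}{k}}\,y_k$; the paper does carry out exactly this symmetry reduction, but only later, in Lemma~\ref{lem:calc-evals-ham-balls}. The real divergence is the lower bound. The paper takes the flat test vector supported on the single top level $[d]^{(i)}$ and estimates $\langle A^{2k}v,v\rangle$ by counting closed $2k$-walks, lower-bounding the count by a Catalan number times $\bigl((i-k)(d+1-(i-k))\bigr)^k$ and then taking the $(2k)$-th root; the polynomial loss from the Catalan asymptotics $\tfrac{1}{k+1}\binom{2k}{k}\sim 4^k k^{-3/2}$ forces $k\asymp(i\log i)^{1/2}$. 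You instead use a Chebyshev (sine-profile) test vector spread over the top $L+1$ levels and bound the Rayleigh quotient directly via $\sum\sin^2(j\theta)=(L+2)/2$ and $\sum\sin(j\theta)\sin((j+1)\theta)=\tfrac12(L+2)\cos\theta$; both identities and the resulting estimate $\lam_1(T)\ge 2\,b_{i-L}\cos(\pi/(L+2))$ check out, as does $b_{i-L}/b_{i-1}\ge 1-O(L/i)$.

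One small inaccuracy worth flagging: you assert that $L\asymp(i\log i)^{1/2}$ ``balances'' the two error terms $O(L/i)$ and $O(L^{-2})$. It does not --- with that $L$ the first term $\asymp i^{-1/2}\log^{1/2}i$ dominates the second $\asymp (i\log i)^{-1}$. The conclusion is still correct (the total error is $O(i^{-1/2}\log^{1/2}i)$, matching the theorem), but the genuine balance point is $L\asymp i^{1/3}$, which actually yields the sharper rate $O(i^{-2/3})$. So your method, optimised, proves a stronger error bound than the paper's moment argument, which is intrinsically stuck at $O(i^{-1/2}\log^{1/2}i)$ because of the $k^{-3/(2k)}$ factor from the Catalan numbers. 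You undersold your own approach.
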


    Our first main result is a generalisation of Theorem \ref{thm:star}. We
    prove that for a wide range of radii, the Hamming balls have largest eigenvalues
    which are asymptotically largest among all subgraph of the cube of the same
    order.  We note that Samorodnitsky \cite{samo} obtained an equivalent result
    for a wider range of radii (namely for radii $i$ satisfying $i \to \infty$;
    our proof works also if $i$ is bounded). His proof
    methods are very different from ours.
    \begin{restatable}{thm}{thmHammingBestAsymptotically}
            \label{thm:hamming-best-asym}
        Let $i=i(d)=o(d)$ and let $G$ be a subgraph of $Q_d$ with
        $n=O\left(\left|[d]^{(\le i)}\right|\right)$ vertices.  Then
        $\lambda_1(G)\le (1+o(1)) \;
        \lambda_1(H^i_d)$.
    \end{restatable}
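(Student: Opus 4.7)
The plan is to apply the method of compressions. Let $V = V(G)$; since $\lambda_1$ is monotone under adding edges (Perron--Frobenius applied to the non-negative adjacency matrix), I may assume $G = Q_d[V]$ is an induced subgraph.

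I introduce two families of compression operations on subsets of $\{0,1\}^d$. For each coordinate $j \in [d]$, the \emph{down-shift} $C_j$ partitions $\{0,1\}^d$ into $2^{d-1}$ pairs differing only at position $j$; in each pair intersecting $V$ in exactly one vertex, it replaces that vertex by the one with $0$ at position $j$. For each pair $1 \leq j < k \leq d$, the \emph{within-level shift} $C_{j,k}$ is defined analogously, favouring coordinate $j$ over coordinate $k$ within each weight level. The central lemma is that each of these operations weakly increases $\lambda_1$. To prove it, I take the non-negative Perron eigenvector $x$ of $Q_d[V]$ and build a test vector $x'$ on the compressed set by, in each affected pair, placing the larger of the two $x$-values at the compressed location. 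Then $\|x'\|_2 = \|x\|_2$, and an edge-by-edge inspection of the four-vertex squares of $Q_d$ (classifying by which of the four corners lie in $V$) shows $x'^\top A x' \geq x^\top A x$. The Rayleigh characterisation of $\lambda_1$ then yields the inequality.

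Iterating the compressions to stability yields a set $V^*$ of size $n$ with $\lambda_1(Q_d[V^*]) \geq \lambda_1(G)$ and a rigid structure: $V^*$ is a down-set whose restriction to each weight level is compressed in the $C_{j,k}$-sense. It remains to bound $\lambda_1(Q_d[V^*])$ from above by $(1+o(1))\lambda_1(H_d^i)$, and this is done via a level-decomposition argument. Letting $x^*$ be the Perron eigenvector of $Q_d[V^*]$ and $y_w = \sum_{v \in V^* \cap [d]^{(w)}} x^*(v)^2$, I will use
$$\lambda_1(Q_d[V^*]) \leS 2 \sum_w \|B_w\|_{\mathrm{op}}\, \sqrt{y_w\, y_{w+1}},$$
where $B_w$ is the biadjacency matrix of the bipartite subgraph between $V^* \cap [d]^{(w)}$ and $V^* \cap [d]^{(w+1)}$. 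The combined structural constraints on $V^*$ (being a stable compressed down-set of size $O(|[d]^{(\leq i)}|)$, together with the fact that $\binom{d}{i+1}/\binom{d}{i} = (d-i)/(i+1) \to \infty$ since $i = o(d)$) force the operator norms $\|B_w\|_{\mathrm{op}}$ to be effectively concentrated at levels $w = i + o(i)$, and the resulting tridiagonal optimum matches $(1+o(1)) \cdot 2\sqrt{i(d+1-i)}$, which is $(1+o(1))\lambda_1(H_d^i)$ by Theorem~\ref{thm:hamming-eval}.

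\textbf{Main obstacle.} The chief technical difficulty will be the variational comparison establishing that the compressions weakly increase $\lambda_1$: the edge-by-edge case analysis on four-vertex squares must handle every vertex-membership pattern, and it must be carried out separately for the down-shifts $C_j$ and the within-level shifts $C_{j,k}$ (the latter being more intricate because both sides of the pair carry the same weight). A secondary but equally delicate issue is extracting from the compressed structure of $V^*$ a sharp enough operator-norm bound on the $B_w$ near $w = i$: the crude estimate $\|B_w\|_{\mathrm{op}} \leq \sqrt{(d-w)(w+1)}$ costs a constant factor in the bounded-$i$ regime, and a finer combinatorial analysis of the bipartite graph between consecutive levels inside a compressed set is required to recover the tight asymptotic.
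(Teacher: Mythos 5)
There is a genuine gap, and it sits exactly where the paper's real work is. Your first half (the two compression families and the fact that they do not decrease $\lambda_1$) is the same as the paper's Lemmas \ref{thm:down-closed} and \ref{lem:ij-compressed}, and is fine. But after reducing to a compressed down-set $V^*$, you bound $\lambda_1$ by $2\sum_w \|B_w\|_{\mathrm{op}}\sqrt{y_w y_{w+1}} \le 2\max_w\|B_w\|_{\mathrm{op}}$ and then simply assert that the compressed structure ``forces'' $\max_w\|B_w\|_{\mathrm{op}}$ to be $(1+o(1))\sqrt{i(d+1-i)}$. No argument is offered for this, and it is not a routine refinement: a compressed set with $n=O\bigl(|[d]^{(\le i)}|\bigr)$ vertices is in general not contained in the bottom $i+o(i)$ levels --- it can reach level $\log_2 n \approx i\log_2(d/i)$ (e.g.\ it may contain a subcube $\mathcal{P}([m])$ with $m\approx i\log_2(d/i)$, or such a subcube with each vertex extended by one coordinate beyond $[m]$), and at those levels individual vertices can still have up-degree of order $d$. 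So the crude per-level estimate $\|B_w\|_{\mathrm{op}}\le\sqrt{(w+1)(d-w)}$ does not lose ``a constant factor in the bounded-$i$ regime'' as you say; at levels $w\gg i$ it exceeds $2\sqrt{i(d-i)}$ by a factor growing like $\sqrt{\log(d/i)}$, and nothing in your outline rules out the eigenvector placing substantial mass there. The statement you would need --- that every compressed set of this size has all inter-level operator norms at most $(1+o(1))\sqrt{i(d+1-i)}$ --- is essentially as strong as the theorem itself, and proving it would require exactly the kind of structural/counting input you have omitted.

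The paper supplies that missing mechanism. It defines the nested ``heavy'' sets $\A_k$ (vertices with at least $\eps d$ neighbours in $\A_{k-1}$), and the crucial counting argument (Claim \ref{claim:bound-on-depth}) shows that with $\eps=\alpha/\log(d/i)$ the depth $M$ of this hierarchy is at most $(1+o(1))i$, because a deeper hierarchy would force at least $|[\eps d]^{(\le M)}|$ vertices, exceeding the budget $O\bigl(|[d]^{(\le i)}|\bigr)$. It then partitions $V(G)$ (Proposition \ref{prop:clarify}) into Hamming-ball-like pieces $N^{(k)}_S$ of radius at most $M$ around heavy cores, plus leftover graphs $G[\C_k\cup\D_k]$ of maximum degree at most $\eps d$; the pieces are handled by Theorem \ref{thm:hamming-eval} (or Claim \ref{claim:trivial-bound-ev} for bounded $i$) and the leftovers contribute only $O(\sqrt{\eps d\log n})$. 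This is precisely the device that controls dense structure above level $i$, and it has no counterpart in your proposal; without it (or a genuinely different argument of comparable strength for your operator-norm claim), the proof does not go through.
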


    Finally, our second main result gives an exact answer when the radius is
    fixed. 
    \begin{restatable}{thm}{thmFixedRadius} \label{thm:fixed-radius}
        For every $i$ there is $d_0=d_0(i)$ such that for $d\ge d_0$ the Hamming
        ball $H_d^i$ maximises the largest eigenvalue among subgraphs of $Q_d$
        with the same number of vertices.
    \end{restatable}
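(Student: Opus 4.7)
My plan is to combine the method of compressions (which underlies the earlier results in the paper) with a sharper perturbation analysis tailored to the regime of fixed $i$ and $d$ large.

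\textbf{Reduction to down-sets.} Exactly as in the proofs of Theorems \ref{thm:star} and \ref{thm:hamming-best-asym}, the coordinate compression $C_j$ does not decrease $\lambda_1(Q_d[\cdot])$; this is established by a pairwise Rayleigh quotient comparison on the $\{j,k\}$-slabs, using the nonnegativity of the Perron eigenvector. Iterating $C_1, \ldots, C_d$ to a fixed point, we may assume that the maximiser $U$ is a down-set (i.e., $S \subseteq T$ and $T \in U$ imply $S \in U$) of size $n := |H_d^i|$. Since the collection of all subsets of $[d]$ of size at most $i$ has exactly $n$ elements, the only down-set of size $n$ contained in this collection is $H_d^i$ itself, so if $U \ne H_d^i$, then $U$ must contain some set of size $\ge i + 1$ and miss some set of size $\le i$.

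\textbf{Quantitative comparison.} Let $L_j$ denote the set of vertices of $Q_d$ of Hamming weight $j$, and set $\lambda^* := \lambda_1(H_d^i)$. By coordinate symmetry, the Perron vector $\phi^*$ of $H_d^i$ is constant on each layer, and its layer values $\phi^*_0, \ldots, \phi^*_i$ satisfy $\lambda^* \phi^*_j = j \phi^*_{j-1} + (d - j) \phi^*_{j+1}$ (with $\phi^*_{i+1} := 0$). For fixed $i$, $\lambda^* = c_i \sqrt{d}\,(1 + O(1/d))$ and the ratios $\phi^*_{j+1}/\phi^*_j$ are of order $1/\sqrt{d}$, so $\phi^*$ is sharply concentrated on the lowest layer. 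Let $f$ be the Perron vector of $Q_d[U]$ and set $\xi_j := \|f|_{U \cap L_j}\|_2$. A Cauchy--Schwarz argument applied edge by edge between consecutive layers yields $\lambda_1(Q_d[U]) \le \lambda_1(M_U)$, where $M_U$ is a symmetric tridiagonal matrix whose entries reflect the layer profile $(u_j) := (|U \cap L_j|)$ of $U$. A perturbation analysis now compares $\lambda_1(M_U)$ to $\lambda^*$: the extra entries in rows $\ge i + 1$ of $M_U$ inflate $\lambda_1(M_U)$ by at most $O(\sqrt{u_{i+1}}/\sqrt{d})$, while any layer $j \le i$ with $u_j < \binom{d}{j}$ forces the dominant block of $M_U$ to have strictly smaller top eigenvalue, with deficit $\Omega(1/\sqrt{d})$ per missing vertex. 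The down-set condition together with $|U| = n$ enforces that the number of missing lower-layer vertices equals the number of extra higher-layer vertices, and careful accounting shows the deficit strictly dominates the inflation for $d$ large.

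\textbf{Main obstacle.} The delicate point is that the straightforward Cauchy--Schwarz bound already introduces an extra row and column in the tridiagonal matrix, which alone can inflate $\lambda_1(M_U)$ by $O(1/\sqrt{d})$, exactly the order of the deficit one hopes to exhibit. To separate the two effects, I would first bootstrap from Theorem \ref{thm:hamming-best-asym} to conclude that the Perron vector $f$ is approximately layer-constant (making the Cauchy--Schwarz step nearly tight), and then perform a finer analysis that keeps track of subleading $O(1/d)$ terms. The strict inequality then follows provided $d$ is chosen sufficiently large in terms of $i$, which is the form stated in the theorem.
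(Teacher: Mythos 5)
Your reduction to compressed sets is the right starting point and matches the paper's setup. From there, though, you take a genuinely different route, and it does not close. You propose to bound $\lambda_1(Q_d[U])$ by the top eigenvalue of a layer-profile tridiagonal matrix $M_U$ and then argue by perturbation that the deficit from missing low-level vertices dominates the inflation from extra high-level ones. You identify the obstacle correctly -- both effects live at the same $O(1/\sqrt d)$ scale -- but your proposed fix does not resolve it. Bootstrapping from Theorem~\ref{thm:hamming-best-asym} to get approximate layer-constancy of the Perron vector of $Q_d[U]$ is not available: for a compressed down-set $U\neq H_d^i$ the Perron vector is genuinely non-constant within layers, and that within-layer asymmetry is precisely the quantity you need to track; collapsing to a tridiagonal matrix discards it. Also, your inflation estimate $O(\sqrt{u_{i+1}}/\sqrt d)$ is unsubstantiated and of the wrong shape -- the spectral contribution of the upper levels is governed by the Perron weight they carry, not by $u_{i+1}$ alone, and without a separate argument controlling that weight the perturbation bound cannot be closed.

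The paper's proof sidesteps the head-on comparison by exploiting the \emph{variational optimality} of the maximiser $G$, an ingredient absent from your plan. The decisive step, Claim~\ref{claim:val-of-l}, observes that if $G$ contains $[l]$ at some level $l>i$ while missing a set $S$ at level $\le i$, then since $G$ is optimal, transferring the weight $v_{[l]}$ from $[l]$ to $S$ cannot increase $\langle A(G)v,v\rangle$; this yields the lower bound $v_{[l]}=\Omega(v_\emptyset\lambda^{-i})$. Combined with the bound on the number of vertices above level $i$ (Claim~\ref{claim:theta}, proved via a characteristic-polynomial perturbation with an $\Omega(\beta^{2i^2})$ lower bound) and a layer-by-layer spectral argument (Claim~\ref{claim:estimate-v-l-i}), this forces $v_{[l-i]}=\omega(v_\emptyset)$, contradicting the assumption that $v$ is compressed. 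These two ingredients -- the variational lower bound on the top-level weight and the quantitative bound on the number of high-level vertices -- are what your tridiagonal reduction cannot supply, and without them the ``careful accounting'' in your last step is unsupported.
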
 

    In the next section, Section \ref{sec:compressions}, we state and prove 
    results about compressions which will be used in the proofs of the above
    theorems.  We prove Theorem \ref{thm:star} in Section \ref{sec:star}.  In
    Section \ref{sec:calc-eval} we prove Theorem \ref{thm:hamming-eval} as well
    as other bounds on the largest eigenvalue of certain subgraphs of the
    cube.  We prove our first main result, Theorem
    \ref{thm:hamming-best-asym}, in Section \ref{sec:asym} and our
    second main result, Theorem \ref{thm:fixed-radius}, is proved in Section
    \ref{sec:fixed-radius}. We conclude with some remarks and open questions in
    Section \ref{sec:conclusion}.

\section{Compressions} \label{sec:compressions}
    In this sections we prove the results that we shall need about compressions.
    We start by introducing notation.  Let $v \in \mathbb{R}^{V(G)} \subseteq
    \mathbb{R}^{V(Q_d)}$. Then $\langle A(G)v, v\rangle = \langle A(Q_d)v,
    v\rangle$, since the support of $v$ is contained in $V(G)$. Hence
    \[\max_{|G| = n}\lambda_1(G) = \max_{|G| = n,\,\, ||v|| = 1}\langle A(G)v,
    v\rangle = \max_{||v|| = 1,\,\,\textrm{supp}(v) = n}\langle A(Q_d)v,
    v\rangle.\]

    We consider a notion of \emph{compressions} acting on vectors in
    $\mathbb{R}^{V(Q_d)}$. Let $U, V \subseteq [d]$ be such that $U \cap V = \emptyset$
    and let $v \in \mathbb{R}^{A(Q_d)}$.  We define $C_{U, V}(v) \in
    \mathbb{R}^V(Q_d)$ as follows, where $S \subseteq [d]$.
    \begin{align*}
    \left(C_{U, V}(v)\right)_S = 
    \left\{
    \begin{array}{ll}
    \max(v_S, v_{S \triangle (U \cup V)}) & V \subseteq S \text{ and } U \cap S = \emptyset \\
    \min(v_S, v_{S \triangle (U \cup V)}) & U \subseteq S \text{ and } V \cap S = \emptyset \\
    v_S & \text{otherwise}
    \end{array}
    \right.
    \end{align*}

    Note that $C_{U,V}$ applies a $U-V$ compression to the support of $v$,
    leaving the multiset of entries of $v$ unchanged. In particular, it
    preserves the size of the support of $v$ and its norm.

    The \emph{binary order} on $Q_d$ is defined as follows: $S < T$ if
    and only if $\max{S \triangle T} \in T$ for $S, T \in V(Q_d)$.  We define
    the binary $i$-compression $C_i(v)$ to rearrange the
    values $(v_S)_{i \in S}$ to be decreasing in the binary order
    restricted to the subcube $\{S : i \in S\}$, and rearrange the values
    $(v_S)_{i \notin S}$ to be decreasing in the binary order
    restricted to $\{S : i \notin S\}$. We define $C^+_i $ and $C^-_i$ to be the
    restrictions of $v$ to sets containing $i$ or not containing $i$
    respectively. Note that $C^+_i$ and $C^-_i$ commute with
    $C_i$.

    We may naturally apply these maps to the indicator function of a set $F$ to
    obtain another indicator function, coinciding with the usual definitions of
    these maps on sets. We suppress explicit usage of the indicator function
    where this can be done without confusion.

    Given $i \in [d]$, we abuse notation by denoting the singleton
    $\{i\}$ by $i$ where this is not likely to cause confusion. Furthermore, if
    $S \subseteq [d]$ we denote $S \cup \{i\}$ by $S + i$ and similarly we
    denote $S \setminus \{i\}$ by $S - i$.
    The following two results show that by applying a $C_{i, \emptyset}$
    compression or a $C_{i, j}$ compression to a vector $v$, we do not decrease
    the inner product $\langle A(Q_d) v, v \rangle$.

    \begin{lem}\label{thm:down-closed}
        Let $i \in [d]$ and $v \in \mathbb{R}^{V(Q_d)}$ and denote $A = A(Q_d)$
        and $\bar{v} = C_{i, \emptyset}(v)$.  Then $\langle Av, v \rangle \le
        \langle A\bar{v}, \bar{v} \rangle$.
    \end{lem}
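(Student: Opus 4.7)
The plan is to expand the quadratic form as
\[
\langle Av, v\rangle \eqS 2 \sum_{\{S,T\} \in E(Q_d)} v_S v_T,
\]
partition the edges of $Q_d$ by the coordinate $j \in [d]$ in which their endpoints differ, and show that for each $j$, the contribution is non-decreasing under the compression $C_{i,\emptyset}$.

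First I would dispose of the edges in direction $i$. Such edges are exactly the pairs $(S, S+i)$ with $i \notin S$, and these are precisely the pairs across which $C_{i,\emptyset}$ operates. The compression replaces the pair $(v_S, v_{S+i})$ by the pair $(\max(v_S, v_{S+i}), \min(v_S, v_{S+i}))$, so the product $v_S v_{S+i}$ is invariant. Hence direction-$i$ edges contribute the same amount to $\langle A\bar v, \bar v\rangle$ as to $\langle Av, v\rangle$.

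Next, for each $j \neq i$, I would group the direction-$j$ edges into 4-cycles on the vertex set $\{S,\, S+i,\, S+j,\, S+i+j\}$ where $i, j \notin S$. Each direction-$j$ edge lies in exactly one such 4-cycle (the one obtained by also flipping coordinate $i$), so summing over these 4-cycles gives the total direction-$j$ contribution. Write
\[
p \eqS v_S, \quad q \eqS v_{S+i}, \quad r \eqS v_{S+j}, \quad s \eqS v_{S+i+j}.
\]
The direction-$j$ edges contribute $pr + qs$ before compression. After applying $C_{i,\emptyset}$, the values at $S$ and $S+i$ become $\max(p,q)$ and $\min(p,q)$, and similarly for the pair $(S+j, S+i+j)$, so the contribution becomes $\max(p,q)\max(r,s) + \min(p,q)\min(r,s)$. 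The key elementary inequality
\[
\max(p,q)\max(r,s) + \min(p,q)\min(r,s) \geS pr + qs
\]
follows at once by a case analysis: if $p\ge q$ and $r\ge s$ (or the reverse), the two sides are equal; in the remaining cases the difference equals $(p-q)(s-r) \ge 0$ or $(q-p)(r-s) \ge 0$. Summing these inequalities over all 4-cycles and all $j \neq i$ yields $\langle A\bar v, \bar v\rangle \ge \langle Av, v\rangle$.

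The only real obstacle is bookkeeping to make sure each direction-$j$ edge is counted exactly once as the $4$-cycles are summed. Since every direction-$j$ edge has unique endpoints $S, S+j$ and the 4-cycle is determined by the unordered pair $\{S, S+j\}$ together with the fixed coordinate $i$, there is no double counting. The rest is the elementary two-term rearrangement inequality above.
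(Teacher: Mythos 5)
Your proof is correct and follows essentially the same route as the paper: direction-$i$ edges are invariant since the compression at most swaps the two endpoint values, and for $j \neq i$ the direction-$j$ edges are paired as $(S, S+j)$ and $(S+i, S+i+j)$ inside a $4$-cycle, where the two-term rearrangement inequality gives the claimed monotonicity. You have merely spelled out the case analysis and the bookkeeping that the paper leaves implicit.
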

    \begin{proof}
        Consider an edge $ST \in E(Q_d)$ with $S \subset T$. If $T\setminus S =
        \{i\}$, then $v_S$ and $v_T$ are either swapped or not, and in either case
        the contribution of $ST$ to the inner product is unchanged. All other edges
        have either $i \in S \cap T$ or $i \neq S \cup T$. These edges 
        come in pairs $(S, S+j)$, $(S+i, S+i+j)$. By the rearrangement inequality
        and the definition of $C_{i,\emptyset}$, the contribution of this pair of
        edges to the inner product is larger in $C_{i,\emptyset}(v)$ than in $v$. 
    \end{proof}

    \begin{lem}\label{lem:ij-compressed}
        Let $i, j \in [d]$ be distinct, let $v \in \mathbb{R}^{V(Q_d)}$, and
        denote $A = A(Q_d)$ and $\bar{v} = C_{i, j}(v)$.
        % and suppose that $C_{i, \emptyset}(v) = v$.
        Then $\langle Av, v \rangle \leq \langle A \bar{v}, \bar{v}\rangle$.
    \end{lem}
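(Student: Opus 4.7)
The plan is to partition the edges of $Q_d$ by direction, show that the compression preserves contributions along directions $i$ and $j$, and show it increases contributions along every other direction $k$ via the rearrangement inequality.

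Recall that $C_{i,j}$ acts only on coordinates indexed by $R+i$ and $R+j$ with $i, j \notin R$, setting $\bar v_{R+j} = \max(v_{R+i}, v_{R+j})$ and $\bar v_{R+i} = \min(v_{R+i}, v_{R+j})$, while fixing $v_R$ and $v_{R+i+j}$. Writing $\langle Av, v\rangle = 2\sum_{ST \in E(Q_d)} v_S v_T$, for each $R \subseteq [d] \setminus \{i, j\}$ the four $i$- and $j$-edges inside the $2$-subcube on $\{R, R+i, R+j, R+i+j\}$ contribute
$$v_R v_{R+i} + v_R v_{R+j} + v_{R+i+j} v_{R+i} + v_{R+i+j} v_{R+j} = (v_R + v_{R+i+j})(v_{R+i} + v_{R+j}).$$
Since the compression only permutes the pair $\{v_{R+i}, v_{R+j}\}$, this contribution is unchanged.

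For each $k \in [d] \setminus \{i, j\}$ and each $R \subseteq [d] \setminus \{i, j, k\}$, I group the four parallel $k$-edges $\{R, R+k\}$, $\{R+i, R+i+k\}$, $\{R+j, R+j+k\}$, $\{R+i+j, R+i+j+k\}$. The first and last contribute $v_R v_{R+k}$ and $v_{R+i+j} v_{R+i+j+k}$, both unchanged since all endpoints are fixed by $C_{i,j}$. Writing $a = v_{R+i}, b = v_{R+j}, c = v_{R+i+k}, d = v_{R+j+k}$, the middle two contribute $ac + bd$, which becomes $\min(a, b)\min(c, d) + \max(a, b)\max(c, d)$ after compression; this is at least $ac + bd$ by the rearrangement inequality. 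Summing over all such $R$ and all $k$ gives $\langle A\bar v, \bar v\rangle \geq \langle Av, v\rangle$.

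The main obstacle is recognising that we must group \emph{two} pairs of parallel edges together rather than just one: the swap $v_{R+i} \leftrightarrow v_{R+j}$ is coordinated with the swap $v_{R+i+k} \leftrightarrow v_{R+j+k}$, because both are enforced by the same $C_{i,j}$ action (applied at $R$ and at $R+k$ respectively). This two-pair coupling is what makes the rearrangement inequality applicable, in contrast to the single-pair argument used in Lemma~\ref{thm:down-closed}. The remaining bookkeeping—that every edge of $Q_d$ appears exactly once either inside one of the $2$-subcubes of the first type or in one of the groups of four $k$-edges—is routine.
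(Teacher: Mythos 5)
Your proof is correct and takes essentially the same approach as the paper's: both partition the edges into small groups on which the conditional swaps of $C_{i,j}$ act coherently, observe that the contribution of the $i$- and $j$-direction edges is invariant under permuting the pair $\{v_{R+i},v_{R+j}\}$, and apply the rearrangement inequality to the coupled pairs of edges in each other direction $k$. The only difference is presentational — you organise the grouping by edge direction ($2$-subcubes and quadruples of parallel $k$-edges), while the paper runs a case analysis on which endpoints of an edge are involved in a swap — but the underlying pairings and inequalities are the same.
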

    \begin{proof}
        Consider an edge $ST \in E(Q_d)$, $S \subseteq T$. The function $C_{i,
        j}$ is a composition of conditional swaps, and each vertex of $Q_d$ is
        involved in at most one of these swaps. If neither $S$ nor $T$ are
        involved in a swap, then the contribution of the edge $ST$ to the inner
        product is unchanged.

        If both $S$ and $T$ are involved in a swap, then if $i \in S$ we have
        $v_S$ potentially being swapped with $v_{S-i+j}$ and $v_T$ potentially
        being swapped with $v_{T-i+j}$; if $i\notin S$ then $j \in S$, so $v_S$
        and $v_T$ are potentially swapped with $v_{S-j+i}$ and $v_{T-j+i}$
        respectively. Hence edges $ST$ where both vertices are potentially
        swapped come in pairs $(S,T)$, $(S-i+j, T-i+j)$. By the rearrangement
        inequality, the contribution of each of these pairs to the inner product
        is increased by $C_{i, j}$.

        If only $S$ is involved in a swap, then exactly one of $i$ and $j$,
        whilst both are in $T$. Hence such edges come in pairs $(T-i, T)$ and
        $(T-j, T)$, and the contribution of such pairs to the inner product is
        unchanged by $C_{i, j}$. Similarly, the edges where only $T$ is involved
        in a swap come in pairs $(S, S+i)$ and $(S, S+j)$, and the contribution
        of such pairs to the inner product is unchanged by $C_{i, j}$.
    \end{proof}

    We say that a vector $v \in \mathbb{R}^{V(Q_d)}$ is \emph{compressed} if
    $C_{U, \emptyset}(v) = v$ for every $U \subseteq [d]$ and $C_{i, j}(v) = v$
    for $1\le i < j \le d$.  It follows from Lemma \ref{thm:down-closed} and
    \ref{lem:ij-compressed} that in order to find the maximum of $\lam_1(G)$
    over subgraphs of the cube of order $n$, it suffices to consider induced
    graphs $G$ whose vertex set is compressed.  Furthermore, this maximum equals
    the maximum of $\langle Av, v \rangle$ over compressed vectors $v$ with
    support of size $n$.

    \subsection{Counting copies of subcubes}
        The aim of this subsection is to provide an upper bound on the number of
        copies of a subcube in a subgraph $G$ of the cube in terms of $|G|$.

        Given a set $U \subseteq V(Q_d)$ and $d' \le d$ we denote the number of
        copies of $Q_{d'}$ in $Q_d[U]$ by $\#(Q_{d'} \subseteq U)$.  The
        following result, which was proved by Bollob\'as and Radcliffe
        \cite{bollobas-radcliffe}, shows the number of copies of $Q_{d'}$ is
        maximised by initial segments of the binary order. We present a proof
        here
        for the sake of completeness.  
        \begin{lem}\label{lem:subcubes} 
            Let $U, I \subseteq V(Q_d)$ with $|U| = |I|$ and $I$ is an initial
            segment in binary order.  Then for any $d' \leq d$,
            %\vspace{-20pt}
            $$\#(Q_{d'} \subseteq U) \leS \#(Q_{d'} \subseteq I).$$
        \end{lem}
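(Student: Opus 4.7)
The plan is a compression argument in the spirit of Lemmas~\ref{thm:down-closed} and \ref{lem:ij-compressed}: I will show that applying either of the compressions $C_{i,\emptyset}$ or $C_{\{j\},\{i\}}$ (with $i<j$, the variant that pushes mass toward smaller sets in the binary order) to the indicator function of $U$ cannot decrease $\#(Q_{d'}\subseteq U)$. Iterating these, together with an induction on $d$, shows that $U$ can be replaced by the initial segment $I$ without decreasing the count.

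For $C_{i,\emptyset}$, classify each $d'$-dimensional subcube $C$ of $Q_d$ by whether $i$ is one of its free coordinates. If $i$ is free in $C$, then $C$ is a disjoint union of pairs $(S, S+i)$ with $i\notin S$, and the conditional swaps defining $C_{i,\emptyset}$ only permute values within such pairs, so $C\subseteq U$ iff $C\subseteq C_{i,\emptyset}(U)$. If $i$ is fixed in $C$, pair $C$ with its ``sister'' $C^*$ obtained by flipping $i$'s fixed value: the joint count contributed by $\{C, C^*\}$ is weakly increased by the compression, as one verifies by checking the four possible containment patterns of $C$ and $C^*$ in $U$.

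For $C_{\{j\},\{i\}}$, group the $d'$-subcubes into orbits of size at most four, obtained by varying the restriction of the subcube to the two coordinates $i,j$ while keeping all other fixed and free coordinates constant. Subcubes in which both $i$ and $j$ are free are unaffected. For each remaining orbit, examining the restriction of $U$ to the four vertices of the $(i,j)$-slice at the orbit's base and checking every membership pattern verifies that the number of copies from the orbit cannot decrease under $C_{\{j\},\{i\}}$: a copy lost in one subcube of the orbit is always compensated by a new copy of a ``perpendicular'' subcube in the same orbit. This two-coordinate case analysis is the main technical obstacle.

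Iteration terminates, since each nontrivial application strictly decreases the monovariant $\sum_{S\in U}\mathrm{ord}(S)$, where $\mathrm{ord}(S)$ denotes the position of $S$ in the binary order; this yields a compressed set $U^*$ with $|U^*|=|U|$ and $\#(Q_{d'}\subseteq U)\le \#(Q_{d'}\subseteq U^*)$. An induction on $d$ then completes the proof: writing $U^*=U_0^*\sqcup (U_1^*+d)$ according to the presence of coordinate $d$, the compressions force $U_1^*\subseteq U_0^*$, and both $U_0^*$ and $U_1^*$ are compressed subsets of $Q_{d-1}$. Decomposing $\#(Q_{d'}\subseteq U^*)$ into contributions from $d$ fixed and $d$ free (and using $U_0^*\cap U_1^*=U_1^*$) expresses the count as a sum of lower-dimensional counts to which the induction hypothesis applies, yielding the desired bound by the initial segment $I$.
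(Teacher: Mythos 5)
Your two local compression steps are sound (and the $C_{i,\emptyset}$ argument matches the standard pairing device; a careful orbit computation does verify the $C_{\{j\},\{i\}}$ step, though your sketch is thin there). The genuine gap is at the end: the fixed points of $C_{i,\emptyset}$ and $C_{\{j\},\{i\}}$ for $i<j$ are the down-closed, left-shifted families, and these are \emph{not} all initial segments of the binary order. For example, in $Q_3$ the set $U^*=\{\emptyset,\{1\},\{2\},\{3\}\}$ is fixed by all of your compressions, yet the initial segment of size $4$ is $\{\emptyset,\{1\},\{2\},\{1,2\}\}$, which has $4$ edges to $U^*$'s $3$. So after your iteration you are left with a set $U^*\neq I$, and your final ``induction on $d$'' step does not close the remaining distance. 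Concretely, decomposing $U^*=U_0^*\sqcup(U_1^*+d)$ and applying the inductive hypothesis to each half gives
\[
\#(Q_{d'}\subseteq U^*)\;\le\; \#(Q_{d'}\subseteq I_{n_0})+\#(Q_{d'}\subseteq I_{n_1})+\#(Q_{d'-1}\subseteq I_{n_1}),
\]
with $n_0=|U_0^*|$, $n_1=|U_1^*|$, $I_m$ an initial segment of $Q_{d-1}$. But these $(n_0,n_1)$ need not equal the split $(m_0,m_1)$ coming from the decomposition of the initial segment $I$ of $Q_d$ (in the example above, $(n_0,n_1)=(3,1)$ while $(m_0,m_1)=(4,0)$), so to conclude you would still have to prove a nontrivial ``recombination'' inequality, roughly that the right-hand side is maximised at the split $(m_0,m_1)$. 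This inequality is true, but it is not monotone in the natural marginal sense (moving a point from the top half to the bottom half can temporarily decrease the count), and your sketch neither states nor proves it.

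The paper avoids this difficulty by using a stronger compression: the binary $i$-compression $C_i$, which independently rearranges each half $C^+_i U$ and $C^-_i U$ into an initial segment of $Q_{d-1}$ while \emph{preserving the sizes of both halves}. This lets the inner induction on $d$ be applied to each half of $U$ and of $U'=C_i U$ directly, with the ``nested'' observation turning the $\min$-bound into an exact count for $U'$; no recombination across different split sizes is ever needed. Iterating $C_i$ lands on a set that is either the initial segment or differs from it by a single swap, which the paper handles by a short ad hoc argument. So your route is genuinely different and would need either (a) to replace $C_{\{j\},\{i\}}$ by the stronger $C_i$, or (b) to supply a proof of the missing superadditivity of subcube counts of initial segments under the $d$-th-coordinate split.
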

        \begin{proof}
            We prove the lemma by induction on $d'$. $d' = 0$ is trivial, as $|U| =
            |I|$ Suppose that $d' > 0$. We proceed by induction on $d \geq d'$.
            For $d = d'$ we have that both $\#(Q_{d'} \subseteq U)$ and
            $\#(Q_{d'} \subseteq I)$ are $0$ if $|U| = |I| < 2^d$
            and both are $1$ otherwise.

            Suppose that $d > d'$ and $C_i U = U' \neq U$.  For any
            $H$ a copy of $Q_{d'}$ in $Q_d[U]$, we have that one of the following
            three options holds: $C^+_i(H) = H$, $C^-_i(H) = H$, or $C^-_i(H) =
            C_{i, \emptyset}(C^+_i(H))$.  Hence by induction the following holds.
            \begin{align*}
            \#(Q_{d'} \subseteq U) \leS
            & \#(Q_{d'} \subseteq C^+_i U) + \#(Q_{d'} \subseteq C^-_i U) \; +\\ 
            &\;\min\Big(\#(Q_{d'-1} \subseteq C^+_i U), \#(Q_{d'-1} \subseteq
            C^-_i U)\Big)\\[5pt]
            \leS& \#(Q_{d'} \subseteq C^+_i U') + \#(Q_{d'} \subseteq C^-_i U')  \;+\\ 
            &\;\min\Big(\#(Q_{d'-1} \subseteq C^+_i U'), \#(Q_{d'-1} \subseteq
            C^-_i U')\Big)\\[5pt]
            \eqS &\#(Q_{d'} \subseteq U')
            \end{align*}
            The first inequality follows from the fact that $C^-_i U'$ and $C_{i,
            \emptyset}(C^+_iU')$ are nested. 

            Define a finite sequence $\{U_i : i = 0, \ldots, K\}$ by taking $U_0 =
            U$ and $U_{k+1} = C_i U_k$ for the least $i$ such that
            $C_i U_k \neq U_k$ if such an $i$ exists.  It is easy
            to verify that this sequence cannot be infinite. Denote $W = U_K$.  Then
            $\#(Q_{d'} \subseteq U) \leq \#(Q_{d'} \subseteq W)$ and
            $C_i W = W$ for every $i \in [d]$.  If $W = I$ the proof
            is complete, thus we may assume that $W \neq I$.

            Since $W \neq I$, $W$ is not initial, so there exists $S
            < T$ with $S \notin W$ and $T \in W$.  Since $C^+_i W$
            and $C^-_i W$ are both initial in the binary order, we have
            that $i \in S \triangle T$ for every $i \in [d]$. In other words, $S =
            T^c$, and there is at most one such pair $(S, T)$, so $T$ is the
            successor of $S$ in binary order and is the maximal element of
            $W$. Hence $T = \{d\}$ and $S = [d-1]$. But then $T$ is in at most one
            $Q_{d'}$ in $W$, whilst $S$ is in $\binom{d-1}{d'} \geq 1$ copies of
            $Q_{d'}$ in $W - T + S$. Hence $I = W - T + S$ has at least as many
            $Q_{d'}$ subgraphs as $W$, completing the proof.
        \end{proof}

        The following upper bound on the number of copies of a subcube follows easily.
        \begin{lem}\label{lem:cubes-bound}
            Let $U$ be a subset of $V(Q_d)$ of size $n$. Then \[\#(Q_{d'} \subseteq
            U) \le \frac{n}{2^{d'}}\binom{\log_2n + 1}{d'}\]
        \end{lem}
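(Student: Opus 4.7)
The plan is to reduce to the case that $U$ is an initial segment in the binary order and then apply a short double counting. By Lemma \ref{lem:subcubes}, $\#(Q_{d'} \subseteq U)$ is maximised when $U$ equals the initial segment $I$ of size $n$, so it suffices to bound $\#(Q_{d'} \subseteq I)$.

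The crucial observation is that, setting $k = \lceil \log_2 n \rceil$, the first $2^k$ elements in the binary order are exactly the subsets of $[k]$, which induce a copy of $Q_k$ inside $Q_d$. Since $n \leq 2^k$, the initial segment $I$ lies inside this copy of $Q_k$, so I can work entirely in a subcube of dimension roughly $\log_2 n$.

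For the double count, consider pairs $(S, H)$ with $S \in I$ and $H \subseteq I$ a copy of $Q_{d'}$ containing $S$. Each such $H$ contributes $2^{d'}$ pairs, so the total is $2^{d'} \cdot \#(Q_{d'} \subseteq I)$. On the other hand, every vertex of $Q_k$ lies in exactly $\binom{k}{d'}$ copies of $Q_{d'}$ in $Q_k$ (one for each choice of $d'$ varying coordinates from $[k]$, the remaining coordinates being fixed by $S$), and every copy of $Q_{d'}$ in $I$ is in particular a copy in $Q_k$. Hence the number of pairs is at most $n \binom{k}{d'}$, giving
\[
\#(Q_{d'} \subseteq I) \;\leq\; \frac{n}{2^{d'}} \binom{k}{d'} \;\leq\; \frac{n}{2^{d'}} \binom{\log_2 n + 1}{d'},
\]
where the last step uses $k \leq \log_2 n + 1$ and the monotonicity of $\binom{x}{d'}$ in $x$ in the relevant range (the case $d' > k$ is trivial because then $I$ has fewer than $2^{d'}$ vertices, so $\#(Q_{d'} \subseteq I) = 0$). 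The argument is short and I do not expect any serious obstacle; the only step that deserves a brief check is the containment $I \subseteq Q_k$ coming from the definition of the binary order.
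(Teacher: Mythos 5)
Your proposal is correct and follows essentially the same route as the paper: reduce to an initial segment via Lemma \ref{lem:subcubes}, note it sits inside a subcube of dimension $\lceil \log_2 n\rceil$, and double count vertex--subcube incidences, with each copy of $Q_{d'}$ counted $2^{d'}$ times. Your write-up just spells out the containment in $Q_k$ and the edge case $d' > k$ more explicitly than the paper does.
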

        \begin{proof}
            By lemma \ref{lem:subcubes}, we may that assume $U$ is initial
            in binary order, so $U$ is contained in a cube of dimension
            $\lceil \log_2 n\rceil$. Hence each vertex is in at most $\binom{\log_2n
            + 1}{d}$ copies of $Q_{d'}$  and each copy of $Q_{d'}$ is counted
            $2^{d'}$ times.
        \end{proof}

        In fact, one can prove a smooth version of the above upper bound. 
        \begin{lem}\label{lem:cubes-smooth-bound}
            Let $U$ be a subset of $V(Q_\infty)$ of size $n$. Then \[\#(Q_{d}
            \subseteq U) \leq \frac{n}{2^{d}}\binom{\log_2n}{d}\]
        \end{lem}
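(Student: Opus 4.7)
The plan is to imitate the proof of Lemma~\ref{lem:cubes-bound} but replace the crude bound $|U| \le 2^{\lceil \log_2 n \rceil}$ by an inductive argument that retains the exact shape of the bound on powers of two. By Lemma~\ref{lem:subcubes} we may assume $U = I_n$, the initial segment of length $n$ in the binary order (in some ambient $Q_D$). Write $n = 2^a + r$ with $a = \lfloor \log_2 n \rfloor$ and $0 \le r < 2^a$. Then $I_n$ splits as the subcube $V(Q_a)$ (the first $2^a$ sets) together with the translate $\{S \cup \{a+1\} : S \in I_r\}$. Sorting copies of $Q_d$ in $I_n$ by whether they sit in one of these halves or span both (in the latter case, coordinate $a+1$ is one of the $d$ free directions, and the lower half being full forces no further constraint), we obtain
\[
\#(Q_d \subseteq I_n) \eqS \binom{a}{d} 2^{a-d} + \#(Q_d \subseteq I_r) + \#(Q_{d-1} \subseteq I_r).
\]

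I would then induct on $d$, with an inner induction on $n$. The base case $d = 0$ is immediate, and the inner base $n = 1$ reads $\#(Q_d \subseteq I_1) = [d = 0]$. In the inductive step, applying the hypothesis to the two remaining terms on the right (once with smaller $n$, once with smaller $d$) reduces the lemma to the real-variable inequality
\[
\binom{a}{d} 2^{a-d} + \frac{r}{2^d}\binom{\log_2 r}{d} + \frac{r}{2^{d-1}}\binom{\log_2 r}{d-1} \leS \frac{2^a+r}{2^d}\binom{\log_2(2^a+r)}{d}
\]
for $0 \le r \le 2^a$, with $\binom{x}{d} = x(x-1)\cdots(x-d+1)/d!$ the polynomial extension and the $r = 0$ terms read off as limits.

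The main obstacle is verifying this inequality: this is where the smoothness of the bound is earned, and it is a genuinely real-analytic statement. Both sides agree at $r = 0$ and at $r = 2^a$, where $I_n$ is exactly $V(Q_a)$ or $V(Q_{a+1})$ and the bound is tight. I would analyse the difference (right side minus left side) as a function of $r$, differentiate using $\frac{d}{dr}\binom{\log_2 r}{d} = \frac{1}{r \ln 2}\,\frac{d}{dx}\binom{x}{d}\big|_{x = \log_2 r}$, and argue that the derivative is positive near $r = 0$ and negative near $r = 2^a$, so the difference is unimodal and thus non-negative throughout. The $d = 1$ warm-up illustrates the shape: there the derivative of the difference simplifies to $\tfrac12 \log_2 \tfrac{2^a + r}{4 r}$, which changes sign once (at $r = 2^a / 3$), forcing non-negativity between the matching endpoint values. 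I expect the same qualitative picture for general $d$, with the crossover point shifting with $d$ but the unimodal shape preserved; the bookkeeping with the polynomial coefficients is where the work lies.
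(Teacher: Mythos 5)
Your combinatorial skeleton is exactly the paper's: reduce to an initial segment via Lemma \ref{lem:subcubes}, split $I_n$ into the full bottom cube and a shifted copy of $I_r$, and count copies of $Q_d$ as ``inside the bottom'' $+$ ``inside the top'' $+$ ``spanning'', giving the same recursion and the same real-variable inequality as the paper's (\ref{eqn:basic-ineq}). The problem is that you then stop at the heart of the matter: the inequality for general $d$ is only \emph{conjectured} to follow from a unimodality picture, verified for $d=1$, with the general case explicitly deferred (``the bookkeeping with the polynomial coefficients is where the work lies''). That bookkeeping is the entire non-trivial content of the lemma, so as it stands this is a genuine gap, not a complete proof. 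Moreover, your parametrization makes the remaining task harder than it needs to be: since your two endpoints $r=0$ and $r=2^a$ both give equality, you are forced to control the sign changes of the $r$-derivative of a degree-$d$ polynomial expression in $\log_2 r$ and $\log_2(2^a+r)$, and nothing in your sketch rules out several sign changes for larger $d$. The paper slices the same two-variable inequality differently: fix $m$ (the small part), write the power of two as $(1+\alpha)m$ and set $\beta=\log_2 m\ge d-1$; then equality holds at $\alpha=0$ and the $\alpha$-derivative is nonnegative term by term, because every factor $\log(1+\alpha)+\beta-j$ with $0\le j\le d-1$ is nonnegative and only increases when $1+\alpha$ is replaced by $2+\alpha$. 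A one-endpoint monotonicity argument then finishes it, with no unimodality analysis at all; if you want to salvage your write-up, switching to that parametrization is the cleanest repair.

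A secondary, smaller issue: you apply the inductive hypothesis to $\#(Q_d\subseteq I_r)$ and $\#(Q_{d-1}\subseteq I_r)$ for all $0\le r<2^a$, but for small $r$ (roughly $r<2^{d-1}$) the quantity $\frac{r}{2^d}\binom{\log_2 r}{d}$ can be zero or even negative while the counts are $0$, so the inductive bound is not available (indeed the stated bound fails for, say, $n=3$, $d=3$). The paper handles this by treating the case $m<2^{d-1}$ separately, replacing those two counts by their true value $0$ and checking the resulting one-term inequality directly; your reduction needs the same case distinction before the real-variable inequality is invoked.
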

        \begin{proof}
            Let $T_{d,n}=\#(Q_d\subseteq I_n)$, where $I_n$ is initial in
            binary order in $Q_\infty$ with $|I|=n$. We prove that $T_{d,n}\le
            \frac{n}{2^d}\binom{\log n}{d}$ by induction on $d$. It is clear for
            $d=0$ so we assume $d>0$.

            We proceed by induction on the number of non zero digits in the binary
            representation of $n$.  If $n$ is a power of $2$, $I_n$ is a cube of
            dimension $\log n$ and we have $T_{d,n}= \frac{n}{2^d}\binom{\log
            n}{d}$.

            Now suppose that $n$ has $l>1$ non zero digits in the binary
            representation.  Write $n=2^{k_1}+\ldots+2^{k_l}$ where $k_1>\ldots>k_l$
            and let $r=2^{k_1}$ and $m=n-r$.  Then by the definition of
            binary order and by induction we have
            \begin{align*}
            T_{n,d} 
            \eqS & T_{r,d}+T_{m,d}+T_{m,d-1}
            \leS \frac{r}{2^d}\binom{\log r}{d}+ \frac{m}{2^d}\binom{\log m}{d}+ \frac{m}{2^{d-1}}\binom{\log m}{d-1}
            \end{align*}

            It remains to prove the following inequality.
            \begin{align} \label{eqn:basic-ineq}
            \frac{r}{2^d}\binom{\log r}{d}+ \frac{m}{2^d}\binom{\log m}{d}+ \frac{m}{2^{d-1}}\binom{\log m}{d-1} 
            \leS \frac{n}{2^d}\binom{\log n}{d}
            \end{align}

            If $m < 2^ {d - 1}$ the second and third summands are zero, and it is
            easy to check that the required inequality holds.  We assume that $m\ge
            2^{d-1}$.  Writing $r=(1+\alpha)m$ and rearranging Inequality
            (\ref{eqn:basic-ineq}), we need to show that the following expression is
            non-negative.
            \addspace{
            \begin{align*}
            &\frac{(2+\alpha)m}{2^d}\binom{\log((2+\alpha)m) }{d}- \frac{(1+\alpha)m}{2^d}\binom{\log ((1+\alpha)m)}{d} \\ 
            & \: \: - \: \frac{m}{2^d}\binom{\log m}{d}- \frac{m}{2^{d-1}}\binom{\log m}{d-1}
            \end{align*}
            }
            Writing $\beta=\log m$, we need to show that the following expression is
            non-negative for $\alpha>0$ and $\beta\ge d-1$.
            \addspace{
            \begin{align*}
            f_{\beta}(\alpha) \eqS  &
            (2+\alpha)\binom{\log(2+\alpha)+\beta }{d}- (1+\alpha)\binom{\log (1+\alpha)+\beta}{d} \\
            & - \binom{\beta}{d}- 2\binom{\beta}{d-1}
            \end{align*}
            }

            Substituting $\alpha=0$ we obtain
            \begin{align*}
            f_{\beta}(0) \eqS 
            2\binom{1+\beta }{d}- \binom{\beta}{d}- \binom{\beta}{d}- 2\binom{\beta}{d-1} \eqS 0
            \end{align*}
            The derivative $f_{\beta}'(\alpha)$ at $\alpha>0$ is
            \addspace{
            \begin{align*}
            &\frac{1}{d!\ln 2}\sum\limits_{i=0}^{d-1}\left(\prod_{0\le j\le d-1, j\neq i}(\log (2+\alpha)+\beta-j)-\prod_{0\le j\le d-1, j\neq i}(\log (1+\alpha)+\beta-j) \right)\\ 
            &\:\: + 2\binom{\log (2+\alpha)+\beta}{d}-\binom{\log (1+\alpha)+\beta}{d}
            \end{align*} 
            }
            We conclude that $f_{\beta}(\alpha) \ge 0$ for $\alpha >0$ and
            $\beta\ge d-1$, as required.
        \end{proof}

\section{The star is best for $n=d$} \label{sec:star}
    In this section we prove Theorem \ref{thm:star}, showing that the star
    maximises the largest eigenvalue among all subgraphs of the cube $Q_d$ with at
    most $d$ vertices. 

    \thmStar*
    
    Note that this result is not entirely obvious. Indeed, a
    natural line of attack is to use the inequality $\lambda_1(G)\le
    \sqrt{s(G)}$ of Favaron, Mah\'eo and Sacl\'e~\cite{FMS2} that we mentioned
    in the introduction, where $s(G)$ is the maximum of the sum of degrees of
    vertices adjacent to some vertex. Taking a vertex $u$, its $k$ neighbours, and
    $\binom{k}{2}$ additional vertices, each joined to two of the $k$ neighbours
    of $u$, we get a subgraph $G$ of $Q_n$ with $n=1+k+\binom{k}{2}=
    (k^2+k+2)/2$ vertices and $e(G)=s(G)=k^2$. Hence, $\lambda_1(G)\le
    \sqrt{s(G)}=k$, which is about $\sqrt{2}$ times as large as $\sqrt{n-1}$,
    the bound we wish to prove. The problem is, of course, that the inequality
    we have applied is far from sharp in this case.

    We shall use the following bound, relating the problem of maximising
    the largest eigenvalue to the task of maximising a trace of a matrix.
    \begin{lem}\label{lem:trace-bound}
        Let $G$ be a bipartite graph with bipartition $\{X, Y\}$. Then 
        \begin{align*}
            (\lambda_1(G))^{2k}
            \leS & \frac{1}{2}\tr\left(A(G)^{2k}\right) \\
            \eqS & \#(\text{closed walks of length 2k starting from a vertex in }X) \\
            \eqS &\#(\text{closed walks of length 2k starting from a vertex in }Y).
        \end{align*}
        In particular, 
        $$(\lambda_1(G))^4\le \#(\text{edges in }G)+2\#(\text{paths of length
        }2\text{ in }G)+ 4\#(C_4\text{ in }G).$$
    \end{lem}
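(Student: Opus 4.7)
The plan is to exploit the bipartite structure in two ways: to pair up the eigenvalues of $A(G)$, and to block-decompose powers of $A(G)$.

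First I would recall that for a bipartite graph $G$ with bipartition $\{X, Y\}$, the adjacency matrix takes the block form
\[ A(G) \eqS \begin{pmatrix} 0 & B \\ B^T & 0 \end{pmatrix}, \]
so that the spectrum of $A(G)$ is symmetric about $0$: if $\lambda$ is an eigenvalue with multiplicity $m$, then so is $-\lambda$. In particular both $\lambda_1(G)$ and $-\lambda_1(G)$ are eigenvalues, and since every $\lambda_i^{2k} \ge 0$ we obtain
\[ \tr\bigl(A(G)^{2k}\bigr) \eqS \sum_i \lambda_i^{2k} \geS 2\lambda_1(G)^{2k}, \]
which gives the first inequality.

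Next I would observe from the block form that
\[ A(G)^{2k} \eqS \begin{pmatrix} (BB^T)^k & 0 \\ 0 & (B^TB)^k \end{pmatrix}, \]
and since $BB^T$ and $B^TB$ have the same nonzero eigenvalues with the same multiplicities, $\tr\bigl((BB^T)^k\bigr) = \tr\bigl((B^TB)^k\bigr)$. The diagonal entry $\bigl(A(G)^{2k}\bigr)_{vv}$ equals the number of closed walks of length $2k$ based at $v$, so summing over $v \in X$ (resp.~$v \in Y$) gives exactly $\tr\bigl((BB^T)^k\bigr)$ (resp.~$\tr\bigl((B^TB)^k\bigr)$). Combined with $\tr(A(G)^{2k}) = \tr((BB^T)^k) + \tr((B^TB)^k)$, each of these counts equals $\tfrac12 \tr(A(G)^{2k})$, establishing the two stated equalities.

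For the specific bound at $k = 2$, I would simply unpack $\tr(A(G)^4) = \sum_{v_0,v_1,v_2,v_3} A_{v_0v_1}A_{v_1v_2}A_{v_2v_3}A_{v_3v_0}$ by splitting on whether $v_0 = v_2$ and whether $v_1 = v_3$. The four cases give respectively: walks $(v_0,v_1,v_0,v_1)$ contributing $2 \#E(G)$ (one per ordered adjacent pair); walks $(v_0,v_1,v_0,v_3)$ with $v_1 \ne v_3$ contributing $2\#P_2$ (one for each ordering of the two leaves); symmetrically $2\#P_2$ when $v_1 = v_3$, $v_0 \ne v_2$; and walks traversing a proper $C_4$, giving $8\#C_4$ (four starting vertices, two directions). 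Dividing by two yields $\lambda_1(G)^4 \le \#E(G) + 2\#P_2 + 4\#C_4$. The only mildly delicate step is the casework for $\tr(A^4)$, which is routine; the conceptual core is just eigenvalue pairing plus the $BB^T$–$B^TB$ identity.
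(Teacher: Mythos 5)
Your proof is correct and follows essentially the same route as the paper, which simply declares the lemma ``immediate'' from the walk-counting interpretation of $(A^k)_{i,j}$; you have filled in exactly the implicit steps (spectral symmetry of a bipartite graph for the factor $\tfrac12$, the $BB^T$/$B^TB$ block identity for the two equalities, and the casework counting closed $4$-walks as $2\#E + 4\#P_2 + 8\#C_4$). No issues to flag.
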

    \begin{proof}
        Immediate from the fact that $\left(A^k\right)_{i,j}$ is the number of walks of
        length $k$ from vertex $i$ to vertex $j$.
    \end{proof}

    We shall also make use of the following bound on the number of edges
    and $4$-cycles in a $K_{2, 3}$-free bipartite graph.
    \begin{claim}\label{claim:bound-cycles-by-l}
        Let $G$ be a bipartite graph with bipartition $\{X, Y\}$ and assume that
        $G$ is $K_{2,3}$-free.  Set $k=|X|$, $l=|Y|$. Then
        \begin{itemize}
        \item
        $\#(C_4 \text{ in } G) \le \binom{l}{2}$.
        \item
        $|E(G)|\le \#(2\text{-paths with both ends in }Y)+k\le 2\binom{l}{2}+k$.
        \end{itemize}
    \end{claim}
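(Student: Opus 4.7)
The key observation I would use is that $K_{2,3}$-freeness forces every pair $y_1, y_2 \in Y$ to have at most two common neighbours in $X$: otherwise three such neighbours, together with $y_1, y_2$, would form a copy of $K_{2, 3}$ with the two-side in $Y$. My plan is to apply this codegree condition together with straightforward double counting for both bullets.

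For the $C_4$ bound, I would parametrise a $4$-cycle by its (unordered) pair of $Y$-vertices together with an unordered pair of common neighbours in $X$. This rewrites the count as
\[\#(C_4 \text{ in } G) \eqS \sum_{\{y_1, y_2\} \subseteq Y} \binom{|N(y_1) \cap N(y_2)|}{2},\]
and since every summand is at most $\binom{2}{2} = 1$ by the codegree condition, the first bullet follows immediately.

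For the second bullet I would prove the two inequalities in the chain separately. The outer one is another instance of the same idea: writing the number of $2$-paths with both endpoints in $Y$ as $\sum_{\{y_1, y_2\} \subseteq Y} |N(y_1) \cap N(y_2)|$ and applying the codegree bound yields at most $2\binom{l}{2}$. The inner inequality does \emph{not} use $K_{2,3}$-freeness at all; for it I would use the pointwise estimate $d \le \binom{d}{2} + 1$, valid for every integer $d \ge 0$, and sum over $x \in X$ to obtain
\[|E(G)| \eqS \sum_{x \in X} d(x) \leS \sum_{x \in X} \binom{d(x)}{2} + k,\]
then observe that $\sum_{x \in X} \binom{d(x)}{2}$ is exactly the number of $2$-paths in $G$ with both endpoints in $Y$.

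There is no real obstacle: both bullets reduce to double counting once the codegree condition has been extracted from the hypothesis. The only point worth checking by hand is the elementary inequality $d \le \binom{d}{2} + 1$ (equality at $d \in \{1, 2\}$, strict otherwise), which accounts for the ``$+\,k$'' slack in the final bound.
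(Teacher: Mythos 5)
Your proof is correct and follows essentially the same route as the paper: the first bullet via the observation that $K_{2,3}$-freeness bounds the codegree of any pair in $Y$ by two (so each pair lies in at most one $C_4$), and the second via the pointwise inequality $d(x)\le \binom{d(x)}{2}+1$ summed over $X$ together with the same codegree bound for the outer inequality. No gaps; the paper's proof is just a terser phrasing of the identical double-counting argument.
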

    \begin{proof}
        The first part follows directly from the fact that $G$ is
        $K_{2,3}$-free, so every pair of vertices in $Y$ is contained in at most
        one $4$-cycle.  The first inequality in the second part follows from the
        observation that for every vertex $v\in X$, we have $d(v)\le
        \#(2\text{-paths in } G\text{ with } v \text{ as the middle vertex})+1$.
        The second inequality again follows from the assumption that $G$ is
        $K_{2,3}$-free.
    \end{proof}

    We now proceed to the proof of Theorem \ref{thm:star}.
    \begin{proof}[ of theorem \ref{thm:star}]
        Let $G$ be a subgraph of $Q_d$ with  $n\le d$ vertices and assume
        $\lambda_1(G)\ge\sqrt{n-1}$.  Denote by $\{X, Y\}$ the bipartition of
        the vertices of $G$ where $k=|X| \ge |Y| = l$.  For a vertex $v\in V(G)$
        denote by $d(v)$ the degree of $v$.

        By Lemma \ref{lem:trace-bound} and  the fact that $G$ does not have
        $K_{2,3}$-free, we obtain the following. 
        \addspace{
        \begin{align*}
        (n-1)^2 \leS & (\lambda_1(G))^4 \\
        \leS & 2\sum\limits_{v\in V}\binom{d(v)}{2}+|E(G)|+4\#(C_4\text{ in }G) \\
        \leS &2\left(\,\binom{k}{2}+\binom{l}{2}+2\#(C_4\text{ in }G )\,\right)+|E(G)|+4\#(C_4\text{ in }G)\\
        \eqS &2l^2-2nl+n^2-n+|E(G)|+8\#(C_4\text{ in }G).
        \end{align*}
        }
        Hence, 
        \begin{equation}\label{eqn_star_main}
        0\leS 2l^2-2nl+n-1+|E(G)|+8\#(C_4\text{ in }G).
        \end{equation}

        We replace $|E(G)|$ and $\#(C_4\text{ in }G)$ by the upper bounds from
        Lemma \ref{lem:cubes-smooth-bound} to obtain the following inequality.
        \begin{align*}
        0\leS &\,2l^2-2nl+n-1+\frac{1}{2}n\log n+2n\binom{\log n}{2}\\
        \eqS &2l^2-2nl+n(\log^2n-\frac{1}{2}\log n+1)-1.
        \end{align*}

        Since $l\le n/2$, we deduce the following upper bound on $l$.
        \begin{align}\label{eqn:upper-bound-l}
        \begin{split}
        l \leS & \frac{1}{4}\left(2n-\sqrt{4n^2-8n(\log^2 n-\frac{1}{2}\log n+1)+8}\,\right) \\
        \eqS & \frac{1}{2}\left(n-\sqrt{n^2-2n\log^2 n+n\log n-2n+2}\,\right).
        \end{split}
        \end{align}

        By Claim \ref{claim:bound-cycles-by-l} and Inequality (\ref{eqn_star_main}), 
        \begin{align*}
        0 \leS &  2l^2-2nl+n-1+10\binom{l}{2}+n-l \\
        \eqS & (l-1)(7l+1-2n).
        \end{align*}

        If  $l\ge 2$  it follows that $l\ge\frac{1}{7}(2n-1)$.  Combining this
        lower bound on $l$ with the upper bound (\ref{eqn:upper-bound-l}), we
        get the following inequality.
        \begin{equation}\label{eqn_star_main2}
        \frac{1}{7}(2n-1)\leS l\leS \frac{1}{2}\left(n-\sqrt{n^2-2n\log^2 n+n\log n-2n+2}\,\right).
        \end{equation}
        This is a contradiction if $n\ge 103$.
        Thus if $n \ge 103$ we must have $l = 1$, implying that $G$ is a star.
    \end{proof}

\section{The largest eigenvalue of the Hamming ball} \label{sec:calc-eval}
    %We now wish to show that the Hamming ball asymptotically achieves the best
    %possible $\lambda_1$. By this we mean that for large $d$, $\lambda_1(H_d^i)$ is
    %at least $(1-o(1))\lambda(d,n)$ for $n = |[d]^{\leq i}|$.

    In this section we estimate the largest eigenvalue of the Hamming ball
    $H_d^i$ for several ranges of $i$ and $d$.  We start by proving Theorem
    \ref{thm:hamming-eval}, where we estimate the eigenvalue of the Hamming ball
    when the radius goes to infinity.

    \thmHammingEval*

    The upper bound from Theorem \ref{thm:hamming-eval} follows trivially from
    the following claim. We shall use this claim in subsequent sections,
    therefore we state it here.

    \begin{claim}\label{claim:trivial-bound-ev}
        Let $G$ be a subgraph of $Q_\infty$. Assume that $G$ has maximum degree
        at most $d$ and that $V(G) \subseteq [d]^{\le t}$, where $t \le d/2$.
        Then $\lambda_1(G)\le 2\sqrt{td}$.
    \end{claim}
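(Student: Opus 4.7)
The plan is to exploit the natural layering of $V(G)$ by Hamming weight. Set $L_k = \{S \in V(G) : |S|=k\}$ for $0 \le k \le t$; since every edge of $Q_\infty$ connects sets whose weights differ by one, each edge of $G$ lies between some $L_k$ and $L_{k+1}$. Let $B_k$ denote the bipartite adjacency matrix of $G$ between $L_k$ and $L_{k+1}$.

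For an arbitrary unit vector $v \in \mathbb{R}^{V(G)}$, decompose $v = \sum_k v_k$ with $v_k$ supported on $L_k$, and set $x_k = \|v_k\|_2$, so that $\sum_k x_k^2 = 1$. The Rayleigh quotient splits as
$$\langle A(G)v, v\rangle \eqS 2\sum_{k=0}^{t-1} \langle B_k v_{k+1}, v_k\rangle.$$
The main step is to bound each operator norm $\|B_k\|_2$. Any $S \in L_k$ has at most $d-k$ up-neighbours in $V(G) \subseteq [d]^{\le t}$ (namely the sets $S+i$ with $i \in [d]\setminus S$), while any $T \in L_{k+1}$ has at most $k+1$ down-neighbours. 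Hence $B_k$ is a $0/1$ matrix with row sums $\le d-k$ and column sums $\le k+1$. A direct computation of the row sums of $B_k^{\top}B_k$ (or equivalently the Schur test) then gives $\|B_k\|_2 \le \sqrt{(d-k)(k+1)} \le \sqrt{td}$, since $k+1 \le t$.

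Combining $|\langle B_k v_{k+1}, v_k\rangle| \le \|B_k\|_2\, x_k x_{k+1}$ with the Cauchy--Schwarz bound $\sum_{k=0}^{t-1} x_k x_{k+1} \le \bigl(\sum_k x_k^2\bigr)^{1/2}\bigl(\sum_k x_{k+1}^2\bigr)^{1/2} \le 1$ gives $\langle A(G)v, v\rangle \le 2\sqrt{td}$, and taking the supremum over unit $v$ yields $\lambda_1(G) \le 2\sqrt{td}$. The only non-trivial point is the identification that the layered bipartite structure forces $\|B_k\|_2 \le \sqrt{td}$; once that is in hand, the rest is a routine application of Cauchy--Schwarz. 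The maximum-degree hypothesis is not strictly needed for this line of argument but is consistent with (indeed implied by) the layer inequalities used.
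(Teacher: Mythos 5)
Your argument is correct and is essentially the paper's own proof: both decompose $G$ into the bipartite graphs between consecutive weight levels, bound each piece's spectral norm by $\sqrt{(k+1)d}\le\sqrt{td}$ (the paper via the degree bound $\lambda_1(G_j)\le\sqrt{(j+1)d}$, you via row/column sums of $B_k$, which is the same estimate), and then recombine with the level weights $x_k$. The only cosmetic difference is that you close with Cauchy--Schwarz on $\sum_k x_k x_{k+1}\le 1$ while the paper uses $2x_kx_{k+1}\le x_k^2+x_{k+1}^2$; both yield the factor $2$.
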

    \begin{proof}
        For  $j\ge 0$ let $V_j=\{S\in V(G):|S|=j  \}$.  Let $G_j=G[V_j\cup
        V_{j+1}]$.  The graph $G_j$ is a bipartite graph whose vertices from one
        side have degree at most $j+1$ and the vertices from the other side have
        degree at most $d$. Thus $\lambda_1(G_j)\le \sqrt{(j+1)d}$.

        Let $v=(v_S)_{S\in V(G)}$ be an eigenvector with norm $1$ and eigenvalue
        $\lambda_1(G)$.  Define $\alpha_j^2=\sum\limits_{S\in V_j}v_s^2$.  Note
        that $E(G)=\bigcup_{0\le j<t-1}E(G_j)$, hence
        \addspace{
            \begin{align*}
                \lambda_1(G) = \langle A(G)v,v\rangle 
                \leS &\sum\limits_{j=0}^{t-1}\langle A(G_j)v,v\rangle \\
                \leS
                &\sum\limits_{j=0}^{t-1}\left(\alpha_j^2+\alpha_{j+1}^2\right)\lambda_1(G_j)\\
                \leS
                &\sum\limits_{j=0}^{t-1}\left(\alpha_j^2+\alpha_{j+1}^2\right)\sqrt{(j+1)d}\\
                \leS & 2\sqrt{td}.
            \end{align*}
        }
        It follows that $\lam_1(G) \le 2\sqrt{td}$, completing the proof of
        Claim \ref{claim:trivial-bound-ev}.
    \end{proof}

    We now turn to the proof of Theorem \ref{thm:hamming-eval}
    \begin{proof} [ of Theorem \ref{thm:hamming-eval}]
        Denote $\lambda = \lambda_1(H_d^i)$ and $A = A(H_d^i)$.  We first note
        that since every $S\in H_d^i$ satisfies $|S|\le i$ and the maximum
        degree in $H_d^i$ is $d-i+1$, Claim \ref{claim:trivial-bound-ev} implies
        that $\lambda\le 2\sqrt{i(d+1-i)}$.

        We now obtain a lower bound on $\lambda$.  Define the vector $v\in
        \mathbb{R}^{V(H_d^i)}$ by $v_S =
        \mathbbm{1}_{[d]^i}(S)\binom{d}{i}^{-\frac12}$. Note that $||v|| = 1$.
        For every $k < i$ we have 
        \[\lambda^{2k} \geq \langle A^{2k}v, v\rangle
        \geq \binom{d}{i}\frac{1}{k+1}\binom{2k}{k}
        \left((i-k)(d+1-(i-k))\right)^k\binom{d}{i}^{-1},\]
        as $r(d+1 - r)$ is the
        number of choices for which edges to use to move from a set of  $r$ to $r-1$
        and $r-1$ to $r$ respectively, and $r(d+1 - r)$ is an increasing function
        for $r \leq i \leq \frac{d}{2}$. Hence 
        \addspace{
            \[\lambda^{2k} \geq
            k^{-\frac{3}{2}}2^{2k}((i-k)(d+1-(i-k)))^k(1+o(1)).\]
        }
        Thus
        \begin{align*}
         \lambda \geS &
         k^{-\frac{3}{2k}}2\sqrt{(i-k)(d+1-(i-k))}\left(1+o\left(k^{-1}\right)\right) \\
          \eqS & 2\sqrt{(i-k)(d+1-(i-k))}\left(1+O\left(k^{-1}\log
          k\right)\right).
        \end{align*}
        Taking $k = \sqrt{i\log i} \rightarrow \infty$, we get $\lambda \ge
        2\sqrt{i(d+1-i)}\left(1+O\left(i^{-\frac{1}{2}}\log^\frac{1}{2}i\right)\right)$,
        completing
        the proof of Theorem \ref{thm:hamming-eval}.
    \end{proof}

    We now consider the case where the radius of the Hamming ball is fixed.
    \begin{lem}\label{lem:calc-evals-ham-balls}
        There exist constants $\lam_1<\lam_2<\ldots$ such that
        $\lam_1(H_d^i)=\lam_i\sqrt{d}(1+O(1/d))$.
    \end{lem}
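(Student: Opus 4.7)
The plan is to exploit the symmetry of $H_d^i$ under the symmetric group $S_d$ to reduce the computation of $\lambda_1(H_d^i)$ to finding the top eigenvalue of a small Jacobi matrix whose entries are explicit algebraic functions of $d$, and then to perturb off the $d \to \infty$ limit.

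First, I would observe that the adjacency matrix $A(H_d^i)$ commutes with the $S_d$ action permuting the coordinates of $[d]$, and $H_d^i$ is connected for $i \ge 1$, so Perron--Frobenius produces a unique positive top eigenvector, which must therefore be $S_d$-invariant. Writing $x_j$ for the common value of this eigenvector on the level $V_j = \{S \subseteq [d] : |S| = j\}$, the eigenvalue equation becomes the three-term recurrence $\lambda x_j = j\,x_{j-1} + (d-j)\,x_{j+1}$ for $0 \le j \le i$, with $x_{-1} = x_{i+1} = 0$. Setting $y_j = \sqrt{\binom{d}{j}}\,x_j$ symmetrises the recurrence, so $\lambda_1(H_d^i)$ is exactly the top eigenvalue of the $(i+1) \times (i+1)$ Jacobi matrix $T_d^i$ whose only nonzero entries are $T_d^i[j-1,j] = T_d^i[j,j-1] = \sqrt{j(d-j+1)}$ for $1 \le j \le i$.

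Next I would rescale. Each off-diagonal entry of $T_d^i/\sqrt{d}$ equals $\sqrt{j\,(1-(j-1)/d)} = \sqrt{j}\,(1 + O(1/d))$ uniformly in $1 \le j \le i$ for fixed $i$ as $d \to \infty$. Hence $T_d^i/\sqrt{d} = M_i + E_d$, where $M_i$ is the fixed Jacobi matrix with off-diagonal entries $\sqrt{j}$ and $\|E_d\|_{\mathrm{op}} = O(1/d)$ (the bound on the operator norm follows from the bound on the Frobenius norm, since $i$ is fixed). Weyl's inequality then yields $\lambda_1(T_d^i) = \sqrt{d}\,(\lambda_1(M_i) + O(1/d))$, which is exactly the required asymptotic once I set $\lambda_i := \lambda_1(M_i)$.

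Finally, I would deduce the strict monotonicity $\lambda_1 < \lambda_2 < \cdots$ from Cauchy interlacing: $M_i$ is the leading principal submatrix of $M_{i+1}$, so $\lambda_i \le \lambda_{i+1}$, and strictness is the standard fact that a Jacobi matrix with nonzero off-diagonal entries has simple spectrum with extremal eigenvectors whose final coordinate is nonzero. The one genuine subtlety of the argument is the opening reduction to the symmetric subspace; apart from this the proof is essentially routine matrix perturbation theory applied to a tridiagonal matrix of bounded size.
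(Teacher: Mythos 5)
Your proof is correct and takes essentially the same route as the paper: the $S_d$-symmetry reduces $\lambda_1(H_d^i)$ to the top eigenvalue of an $(i+1)\times(i+1)$ tridiagonal matrix encoding $\lambda x_j = j x_{j-1} + (d-j)x_{j+1}$, which after rescaling by $\sqrt{d}$ is an $O(1/d)$ perturbation of a fixed tridiagonal matrix similar to your $M_i$ (the paper's $A_i$). The only differences are in execution: you symmetrize via $y_j=\sqrt{\binom{d}{j}}\,x_j$ and apply Weyl's inequality where the paper compares characteristic polynomials, and you obtain strictness of $\lambda_i<\lambda_{i+1}$ from Cauchy interlacing for Jacobi matrices where the paper cites monotonicity of the Perron root for nonnegative matrices.
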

    \begin{proof}
        Let $A_i$ be the $(i+1)\times(i+1)$-matrix defined by
        \begin{align*}
        A_{j,k}=
        \left\{
        \begin{array}{l l}
        1 & j=k+1\\
        j & j=k-1\\
        0 &\text{otherwise}\\
        \end{array}
        \right.
        \end{align*} 
        Denote $\lam_i=\lam_1(A_i)$.  Since $A_i$ is a submatrix of $A_{i+1}$,
        and by monotonicity of the largest eigenvalue for matrices with non negative
        entries, we have $\lam_i<\lam_{i+1}$ for every $i$.  In order to
        complete the proof, it suffices to show that
        $\lam(H_d^i)=\lam_i\sqrt{d}(1+O(1/d))$.

        By symmetry, the eigenvector of $A(H_d^i)$ with eigenvalue
        $\lam_1(H_d^i)$ is uniform on $[d]^{(j)}$ for every $0\le j\le i$.
        Denote by $x_j$ the weight of $[j]$ in the eigenvector.  The
        following holds.
        \begin{align*}
        \lam_1(H_d^i) \; x_j=
        \left\{
        \begin{array}{l l}
        dx_1 & j=0\\
        jx_{j-1}+(d-j)x_{j+1} & 0<j<i\\
        ix_{i-1} &j=i\\
        \end{array}
        \right.
        \end{align*}
        Letting $\mu=\lam_1(H_d^i)/\sqrt{d}$ and $y_j=x_j d^{j/2}$, we obtain
        \begin{align*}
        \mu y_j=
        \left\{
        \begin{array}{l l}
        y_1 & i=0\\
        jy_{j-1}+(1+O(1/d))y_{j+1} & 0<j<i\\
        iy_{i-1} &j=i\\
        \end{array}
        \right.
        \end{align*}
        Recalling the definition of $A_i$, this implies that $\mu
        y_j=(A_iy)_j+O(1/d)$, where $y=(y_0,\ldots,y_i)^T$.  It follows (e.g.~by
        looking at the characteristic polynomials) that $|\mu-\lam_i| =O(1/d)$.
        Lemma \ref{lem:calc-evals-ham-balls} follows.
    \end{proof}

\section{Hamming ball is asymptotically best for $i=o(d)$}\label{sec:asym}
    In this section we prove Theorem \ref{thm:hamming-best-asym} showing that
    for $i=o(d)$ the Hamming ball $H^i_d$ asymptotically maximises the $\lam_1$
    among subgraphs of $Q_d$ with the same number of vertices.  Since our proof
    is rather technical, we start with the special case $i = 1$.

    \subsection{Proof of Theorem \ref{thm:hamming-best-asym} for $i = 1$}
        Let us first state the result for the special case $i = 1$.
        \begin{lem}\label{lem:hamm-asy-best-rad-one}
            Let $c>0$ be fixed and let $G$ be a subgraph of $Q_d$ with $n\le cd$
            vertices.  Then $\lambda_1(G)\le \sqrt{d}+O\left(d^{1/4}(\log
            d)^{1/2}\right)$.
        \end{lem}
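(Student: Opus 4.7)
The plan is to combine the compression tools from Section~\ref{sec:compressions} with the trace method. First, I would apply Lemmas~\ref{thm:down-closed} and~\ref{lem:ij-compressed} to replace $G$ by a compressed graph with the same number of vertices and no smaller largest eigenvalue; thereafter I may assume $V(G)$ is compressed, so $V(G)$ is a simplicial complex on $[d]$ containing $\emptyset$. Since any face $F \in V(G)$ of size $t$ forces $2^{F} \subseteq V(G)$, the maximum face size in $V(G)$ is at most $\log_{2} n = \log_{2} d + O(1)$.

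Next I would use the standard trace bound: since $G$ is bipartite, $\lambda_{1}(G)^{2k} \le \tfrac{1}{2}\tr(A(G)^{2k}) = \tfrac{1}{2}\sum_{v \in V(G)} W_{2k}(v)$, where $W_{2k}(v)$ denotes the number of closed walks of length $2k$ from $v$ in $G$. I will take $k = \lfloor d^{1/4}\sqrt{\log d}\rfloor$. The target is a bound $\tr(A(G)^{2k}) \le 2n\, d^{k}(1+o(1))$. Granted this, taking $2k$-th roots and using $n \le cd$ yields $\lambda_{1}(G) \le (2n)^{1/(2k)}\sqrt{d}\,(1+o(1))^{1/(2k)} = \sqrt{d} + O(d^{1/4}\sqrt{\log d})$, since a direct calculation shows $(2n)^{1/(2k)} = \exp(\log(2n)/(2k)) = 1 + O(\sqrt{\log d}/d^{1/4})$ for this choice of $k$.

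To prove the trace bound I would classify closed walks by the maximum height they visit. A closed walk of length $2k$ flips at most $k$ distinct coordinates and visits only faces of $V(G)$. The \emph{star walks}, which stay at heights in $\{0,1\}$, are $k$-fold concatenations of excursions $\emptyset \leftrightarrow \{i\}$ with $\{i\} \in V_{1}$, contributing $n_{1}^{k}$ walks from $\emptyset$ and $n_{1}^{k-1}$ from each singleton, i.e.\ $2n_{1}^{k} \le 2d^{k}$ in total. For the \emph{excess walks}, which enter a face of size $j \ge 2$, I would use Lemma~\ref{lem:cubes-smooth-bound} to bound the number of faces of size $j$ by $\#(Q_{j} \subseteq V(G)) \le \tfrac{n}{2^{j}}\binom{\log_{2} n}{j}$, and control walks entering a given face $F$ of size $j$ by closed walks in the subcube $Q_{j}$.

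The main obstacle will be making this excess bound genuinely small. The crude estimate $W_{2k}(v) \le W_{2k}(Q_{d})$ is far too lossy, since walks in $Q_{d}$ grow like $(2kd/e)^{k}$ and dwarf $d^{k}$; one really has to exploit the simplicial structure, so that only faces actually present in $V(G)$ contribute. If this can be arranged, the decay should come from the joint smallness of $\binom{\log_{2} n}{j}/2^{j}$ and the polynomial bound $W_{2k}(Q_{j}) \le j^{2k}$ on walks in low-dimensional subcubes, while the calibration $k = \lfloor d^{1/4}\sqrt{\log d}\rfloor$ is chosen precisely so that the $(2n)^{1/(2k)}$ correction matches the target error $O(d^{1/4}\sqrt{\log d})$.
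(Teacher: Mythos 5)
Your reduction and calibration are fine: once $V(G)$ is compressed (Lemmas \ref{thm:down-closed} and \ref{lem:ij-compressed}) the maximal face size is $O(\log d)$, and with $k=\lfloor d^{1/4}\sqrt{\log d}\rfloor$ any bound of the form $\tr\left(A(G)^{2k}\right)\le d^{O(1)}\,d^{k}$ (you do not even need the sharp constant $2n$) would give $\lambda_1(G)\le\sqrt{d}\,\exp\left(O(\log d)/2k\right)=\sqrt{d}+O\left(d^{1/4}(\log d)^{1/2}\right)$. But the entire content of the lemma has been pushed into that trace estimate, and your proposal does not prove it; you flag the "excess walks" as the main obstacle and leave it open. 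The specific mechanism you suggest cannot close it: a closed walk of length $2k$ that visits a face of size $j\ge 2$ does not live inside a copy of $Q_j$ --- it typically spends almost all of its time making $\emptyset\leftrightarrow\{i\}$ excursions of the full star (degree up to $d$) and only briefly pops up to higher levels, so multiplying the face count from Lemma \ref{lem:cubes-smooth-bound} by $W_{2k}(Q_j)\le j^{2k}$ neither upper bounds these walks nor isolates the quantity that matters, namely how much each upward excursion inflates the count relative to the $d^k$ pure star walks. Making that precise requires a genuine decomposition of walks into star sojourns and upward excursions, together with degree information at levels $\ge 1$ in a compressed graph (a singleton can have degree $\Theta(d)$ when $n=\Theta(d)$), and none of this is carried out. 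As it stands the argument is a plan, not a proof.

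For comparison, the paper avoids walk counting altogether: it uses compression to extract the set $\D$ of (closures of) heavy vertices, partitions the edge set into vertex-disjoint stars $N^*(S)$, $S\in\D$, plus the two leftover graphs $G[\D]$ and $G\setminus\D$, each of maximum degree at most $\eps d$ with $\eps=\sqrt{2c/d}$, and then bounds $\lambda_1(G)$ by $\sqrt{d}$ (for the stars) plus $2\sqrt{\eps d\log n}$ (Claim \ref{claim:trivial-bound-ev} applied to the leftover graphs), which is exactly the claimed error term. If you want to salvage the trace route you would need to prove the excursion estimate sketched above; otherwise the decomposition argument is the shorter path, and it is also the one that generalises to Theorem \ref{thm:hamming-best-asym}.
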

        Using our results about compressions, we may assume that $V(G)$ is
        compressed. This enables us to partition $V(G)$ into stars, in such a
        way that the edges not covered by the stars have a small contribution to
        the eigenvalue, thus enabling us to obtain the required estimate of
        $\lam_1(G)$.
        \begin{proof}
            By Lemmas \ref{thm:down-closed} and \ref{lem:ij-compressed} we can
            assume that $V(G)$ is compressed. Namely for every $1\le j<k\le d$
            we have $C_{k,\emptyset}(V(G))=V(G)$ and  $C_{k, j}(V(G))=V(G)$.

            We aim to partition $V(G)$ in such a way that each part induces a
            star and that the graph spanned by the edges not contained in any of
            these parts is of small maximal degree.  This would imply that
            $\lambda_1(G)$ is at most the eigenvalue of the star with $d+1$
            vertices plus an error term which can be controlled by the maximal
            degree of the `leftover' edges.

            Let $\eps=\eps(d)=\sqrt{2c/d}$.  Let $\A$ be the set of vertices of
            degree at least $\eps d$ in $G$. We call these vertices `heavy'.  To
            minimise the maximal degree of the leftover graph, we wish to have
            each heavy vertex as a centre of one of the stars in the partition.
            It may happen e.g.~that $\{1\},\{2\}$ are heavy and $\{1,2\}$ is
            not, in which case $\{1,2\}$ will have to appear in two stars of the
            partition. To avoid this from happening, we add vertices to the set
            of heavy vertices as follows.

            Let $\B=\{t\in[d]:\{t\}\in \A\}$. Note that since $V(G)$ is
            compressed, $\A$ is compressed as well, and thus $\B$ is an interval
            and $m=\max \B=|\B|$. Finally define $\D=\mathcal{P}([m])\cap V(G)$.
            Since $\A$ is down-compressed, $\A\subseteq \D$.  We note that the
            maximum degree of $\A$ is at most $\eps d$. Indeed, suppose that $v
            \in \A$ has at least $\eps d$ neighbours in $\A$. Denote this set of
            neighbours by $A$. The every vertex in $A$ has at least $\eps d$
            neighbours in $V(G)$. Note that by the structure of $Q_d$, no vertex
            is a neighbour of more than two vertices of $A$.  It follows that
            $|V(G)| > \frac{|A|\eps d}{2} \ge \frac{(\eps d)^2}{2} \ge n$, a
            contradiction.  In particular, $m=\deg_{\A}(\emptyset)\le \eps d$.

            For  $S\in \D$  define $N^*(S)=\{S\}\cup \big(N(S) \setminus
            \D\big)$, where $N(S)$ denotes the neighbourhood of $S$ in $G$.  We
            claim that $N^*(S)_{S \in \D}$ is a collection of disjoint sets.
            Indeed, by the choice of $\D$, the $\D \subseteq \mathcal{P}([m])$
            and any vertex in the neighbourhood of $D$ which is not in $\D$ must
            be of the form $S \cup \{s\}$ where $s \notin [m]$ and $S \in \D$.
            Furthermore, clearly, each of these sets induces a star.

            Let $v=(v_S)_{S\in V(G)}$ be a vector of norm $1$ with positive
            entries such that $A(G)v=\lambda_1(G)v$.  Note that the edges of $G$
            are covered by the edges of the graphs $G[\D]$, $G \setminus \D$ and
            $\{N^*(S)\}_{S\in \D}$. We thus obtain the following upper bound on
            $\lambda_1(G)$.
            \addspace{
            \begin{align*}
            \lambda_1(G) \eqS & \langle A(G)v,v\rangle \\
            \leS &  \Big(\sum\limits_{S\in \D}\langle A(G[N^*(S)])v,v\rangle\Big )+\langle A(G[\D])v,v\rangle+
            \langle A(G \setminus \D)v,v\rangle\\
            \leS &\Big(\sum\limits_{S\in \D}\lambda_1(G[N^*(S)])\!\!\sum\limits_{T\in N^*(S)}\!\!\!\!v_T^2\Big)+\Big(\lambda_1(G[\D])\sum\limits_{S\in \D}v_S^2\Big)+\Big(\lambda_1(G \setminus \D)\sum\limits_{S\notin \D}v_S^2\Big).
            \end{align*}
            }

            It remains to obtain upper bounds on the largest eigenvalue of the
            graphs $G[\D]$, $G \setminus \D$ and $\{N^*(S)\}_{S \in \D}$.
            Recall that by Claim \ref{claim:trivial-bound-ev}, given a subgraph
            of $Q_d$, we have $\lam_1(G) \le 2\sqrt{\Delta(G) t}$, where
            $\Delta(G)$ is the maximum degree of $G$ and $t$ is the size of the
            largest set in $V(G)$.  Since $\D \subseteq \mathcal{P}([m])$, the
            maximum degree of $G[\D]$ is bounded by $\eps d$.  Also, by
            definition of $\A$, the maximum degree of $G \setminus \D$ is at
            most $\eps d$.  Since $V(G)$ is compressed, the largest set in
            $V(G)$ is of size at most $\log n$.  It follows from Claim
            \ref{claim:trivial-bound-ev} that 
            $$\lambda_1(G[\D]),\,\lambda_1(G
            \setminus \D)\le 2\sqrt{\eps d \log n}.$$

            Furthermore, $\lambda_1(N^*(S))\le \sqrt{d}$ since each set
            $N^*(S)$ is a star with at most $d+1$ vertices.  Thus, by the above
            inequality and using the disjointness of the sets $N^*(S)$, we
            obtain
            \begin{equation*}
                \lambda_1(G)\le 
                \sqrt{d} +O(\sqrt{\eps d \log d})=\sqrt{d} +O(d^{1/4}(\log d)^{1/2}),
            \end{equation*}
            completing the proof of Lemma \ref{lem:hamm-asy-best-rad-one}.
        \end{proof}

    \subsection{Proof of Theorem \ref{thm:hamming-best-asym}}
        We now prove Theorem \ref{thm:hamming-best-asym} in general.
        \thmHammingBestAsymptotically*
        The idea is similar to the special case of $i = 1$. Again we find a
        partition of $G$ with sets whose largest eigenvalue can be bounded by the
        largest eigenvalue of a suitable Hamming ball. Furthermore, we ensure that
        the leftover edges have a small contribution to the largest eigenvalue of
        $G$.

        \begin{proof} [ of Theorem \ref{thm:hamming-best-asym}]
            By Lemmas \ref{thm:down-closed} and \ref{lem:ij-compressed} we can
            assume that $G$ is compressed.  Similarly to the proof for $i=1$, we
            partition the vertices into sets that induce subsets of the Hamming
            ball of radius approximately $i$.  We choose the  partition in such
            a way that the edges not covered by one of these subsets span a
            graph with small maximal degree.  In this way we can bound the
            eigenvalue of the both subgraphs of $G$ to obtain the required
            bound.  In order to define the partition we need some notation.

            Let $\eps=\eps(d)<1$ and define the following sets recursively.
            \begin{flalign*}
            &\qquad \A_0=V(G).&\\
            &\qquad \A_k=\{S\in \A_{k-1}:S\text{ has at least }\epsilon d\text{
            neighbours in }\A_{k-1} \}.&
            \end{flalign*}

            Let $M=\max\{k:\A_k\neq\emptyset\}$.  Note that since $G$ is
            compressed, the sets $(\A_k)_{0\le k\le M}$ are compressed.  The
            sets $\A_k$ measure how `heavy' a vertex is: for a vertex $v\in
            V(G)$, the larger $\max\{k : v\in\A_k\}$ is, the heavier $v$ is.

            As in the proof of the special case, we want to take the heaviest
            vertices to be the centres of the Hamming balls defining the
            partition.  Since we now have many levels, we first take Hamming
            balls centred at the heaviest vertices, then take as centres the
            heaviest vertices among those that weren't covered in the first
            round, and so on.  This process is somewhat complicated by the fact
            that we want each vertex to appear in at most one such Hamming ball.
            To ensure this, we add some of the vertices to sets of heavy
            vertices using the following definitions.

            We define sets $\B_k$, $\C_k$, $\D_k$, $\E_k$ and numbers $m_k$ for
            $0 \le k \le M$ as follows.  For $k=0$,
            \begin{flalign*}
            &\qquad\mathcal{B}_0=\{t\in [d]: \{t\}\in \A_M \}\cup\{1\}&\\
            &\qquad m_0=\max\mathcal{B}_0&\\
            &\qquad\C_0=\emptyset&\\
            &\qquad\E_0=\D_0=\mathcal{P}([m_0])\cap V(G)&
            \end{flalign*}

            %\item
            For $0<k\le M$ define recursively
            \begin{flalign*}
            &\qquad\mathcal{B}_k=\{t>m_{k-1}+1: \{m_0+1,\ldots,m_{k-1}+1,t\}\in
            \mathcal{A}_{M-k}\}\cup \{m_{k-1} + 1\}& \\
            &\qquad m_k=\max\B_k&\\
            &\qquad\mathcal{C}_k=\{S\cup \{t\}: S\in \C_{k-1}\cup \D_{k-1}, t>m_{k-1}\}& \\
            &\qquad\mathcal{D}_k=(\mathcal{P}([m_k]) \cap V(G))\setminus(\E_{k-1}\cup \C_k)&\\
            &\qquad\mathcal{E}_k=\C_0\cup\ldots\cup\C_k\cup\D_0\cup\ldots\cup \D_k& 
            \end{flalign*}
            %\end{itemize}

            Before we proceed with the proof, we try to convey the ideas behind
            the above definitions.  The sets $\D_k$ defined above will be the
            centres of the Hamming balls and the $\C_k$'s will consist of the
            other vertices covered by these balls.  In each stage we define
            $\C_k$ to be the set of neighbours of vertices which appeared
            previously.  We define $\D_k$ so as to be the up-closure (relatively
            to $V(G)$) of the vertices in $\A_{M-k}$ which were not covered
            previously.  To this end, in each stage $B_k$ and $m_k$ are defined
            so that every $t\in S\in \A_{M-k} \setminus \big(\E_{k-1}\cup
            \C_k\big )$ satisfies $t\le m_k$. Thus $\D_k$ contains $\A_{M-k}
            \setminus \big(\E_{k-1}\cup \C_k\big )$ and is up-closed in $V(G)$.

            We now define the partition of $V(G)$ into sets inducing subgraphs
            of Hamming balls with centres in $\bigcup_{0\le k< M}\D_k$.  For a
            vertex $S\in V(G)$ and $t\ge 1$, let $N_t(S)$ denote the set of
            vertices of $V(G)$ in distance $t$ from $S$.  For every $0\le k<M$
            and every $S\in \D_k$, let 
            \begin{equation*}
            N^{(k)}_S=\{S\}\cup\bigcup_{1\le j\le M-k}\Big(N_{j}(S)\cap \C_{k+j}\Big).
            \end{equation*}

            In order to show that the sets $N_k(S)$ satisfy our requirement we
            need the following proposition.  Its proof is delayed to the end of
            this section.
            \begin{restatable}{prop}{propClarify}\label{prop:clarify} The
                following assertions hold.
            \begin{enumerate}
            \item\label{part:1-partition-NkS}
            The sets $N^{(k)}(S)$, where $0\le k\le M-1$ and $S\in \D_k$, are pairwise disjoint.
            \item\label{part:2-partition-Ck-Dk}
            The sets $\C_k\cup\D_k$, where $0\le k\le M$, form a partition of $V(G)$.
            \item\label{part:3-edges}
            $E(G)= \left( \bigcup_{0\le k\le M-1,\: S \in \D_k}E(G[N^{(k)}(S)]) \right)
            \cup \left(\bigcup_{0\le k\le M}E(G[\C_k\cup \D_k])\right).$
            \item\label{part:4-max-degree}
            The maximum degree of $G[\C_k\cup \D_k]$, where $0 \le k \le M$, is at most $\eps d$.
            \end{enumerate}
            \end{restatable}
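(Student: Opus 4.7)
The plan is to prove the four parts in the order $(2), (1), (3), (4)$, since parts (1) and (3) rely on the partition of $V(G)$ established in (2), and part (4) is the main substantive content. Throughout, I would rely on the fact that $V(G)$ and each $\A_j$ inherit compressedness (down-closure together with closure under coordinate swaps) from the assumption that $G$ is compressed, by Lemmas \ref{thm:down-closed} and \ref{lem:ij-compressed}.

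For part (2), I would first establish inductively that $\P([m_k]) \cap V(G) \subseteq \E_k$, which is immediate from the definition of $\D_k$ as $(\P([m_k]) \cap V(G)) \setminus (\E_{k-1} \cup \C_k)$. Coverage of all of $V(G)$ then follows from $V(G) \subseteq \P([m_M])$: if some $v \in V(G)$ had $\max v > m_M$, then chasing $v$ through the $\A$-filtration together with the defining condition of $\B_{M+1}$ would produce a non-empty $\A_{M+1}$, contradicting the choice of $M$. Disjointness follows by induction on $k$: every element of $\C_k$ contains some coordinate $>m_{k-1}$ by the inductive form of its construction, while $\D_j$ (and, inductively, $\C_j$) for $j < k$ are contained in $\P([m_{k-1}])$.

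For part (1), the key observation is that the definition $\C_k = \{S \cup \{t\} : S \in \C_{k-1} \cup \D_{k-1},\, t > m_{k-1}\}$ endows every $T \in \C_k$ with a unique predecessor in $\C_{k-1} \cup \D_{k-1}$ (namely $T$ with its largest coordinate above $m_{k-1}$ removed); iterating this predecessor map identifies $T$ with a unique $\D_{k_0}$-ancestor, hence a unique $N^{(k_0)}(\cdot)$. For part (3), an edge $\{U, V\}$ of $G$ with $V = U + t$ has $U \in \C_a \cup \D_a$ and $V \in \C_b \cup \D_b$ for some $a, b$ by (2); if $a = b$ the edge lies in $G[\C_a \cup \D_a]$, and if (say) $a < b$ then $t > m_{b-1}$ by the construction of $\C_b$, and the predecessor chain from (1) places both $U$ and $V$ inside $N^{(a)}(S)$, where $S \in \D_a$ is the ancestor of $U$.

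The main obstacle is part (4). The plan is to prove the two inclusions $\C_k \cup \D_k \subseteq \A_{M-k}$ and $(\C_k \cup \D_k) \cap \A_{M-k+1} = \emptyset$ (with the convention $\A_{M+1} := \emptyset$). Given both, any $v \in \C_k \cup \D_k$ has, by the failure of the heaviness condition defining $\A_{M-k+1}$, fewer than $\eps d$ neighbors in $\A_{M-k}$, and hence fewer than $\eps d$ neighbors in $\C_k \cup \D_k$. For the inclusion $\C_k \cup \D_k \subseteq \A_{M-k}$, the choice $m_k = \max \B_k$ guarantees $\{m_0+1, \ldots, m_{k-1}+1, m_k\} \in \A_{M-k}$, and the compressedness of $\A_{M-k}$ (downset closed under $C_{i,j}$) propagates membership to all of $\D_k$; an inductive version of the same argument handles $\C_k$. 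The non-membership $(\C_k \cup \D_k) \cap \A_{M-k+1} = \emptyset$ reflects that elements of $\C_k \cup \D_k$ were omitted from every earlier stage, which by the definition of $\B_{k-1}$ corresponds exactly to failing the heaviness criterion defining $\A_{M-k+1}$. I expect the most delicate step to be proving $\C_k \subseteq \A_{M-k}$ uniformly with $\D_k$, since the $\C$-construction (iterated coordinate additions above $m_{k-1}$) interacts non-trivially with the compressedness of $\A_{M-k}$, and the edge case $k = 0$ also requires a separate direct check.
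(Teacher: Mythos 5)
Your strategy for part (\ref{part:4-max-degree}) rests on the inclusion $\C_k\cup\D_k\subseteq \A_{M-k}$, and this inclusion is simply false, so the heart of your argument collapses. Already at $k=0$: $\D_0=\P([m_0])\cap V(G)$ contains \emph{every} vertex of $G$ inside $[m_0]$, of every level, whereas $\A_M$ is a typically tiny set of very heavy vertices. For instance, if $V(G)=[d]^{(\le 1)}$ (a Hamming ball of radius $1$) with $\eps d$ suitably small, then $\A_M=\{\emptyset\}$ while $\D_0\ni\{1\}$. More generally the whole point of the construction is that $\D_k$ is the \emph{up-closure inside $\P([m_k])\cap V(G)$} of the uncovered part of $\A_{M-k}$, so it deliberately contains many vertices that are not in $\A_{M-k}$; your proposed ``propagation by compressedness'' from the single witness $\{m_0+1,\dots,m_{k-1}+1,m_k\}\in\A_{M-k}$ cannot work, because compressions only certify membership of sets dominated by a $(k+1)$-element set, never of the larger sets that $\D_k$ contains. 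The paper's argument is entirely different and does not go through $\A_{M-k}$-membership of $\C_k\cup\D_k$ at all: one first observes that an edge inside $\C_k\cup\D_k$ can only change a coordinate $t\le m_k$ (adding $t>m_k$ would land the upper endpoint in $\C_{k+1}$, which is disjoint from $\C_k\cup\D_k$), so the degree is at most $m_k$; then one proves $m_k\le\eps d$ by induction, using that $\{m_0+1,\dots,m_{k-1}+1\}\notin\A_{M-k+1}$ (by the definition of $\B_{k-1}$) together with compressedness of $\A_{M-k}$ to bound $m_k$ by $\max(\deg_{G[\A_{M-k}]}(\{m_0+1,\dots,m_{k-1}+1\}),m_{k-1})$.

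There are further genuine gaps in parts (\ref{part:2-partition-Ck-Dk}) and (\ref{part:1-partition-NkS}). Your coverage argument claims $V(G)\subseteq\P([m_M])$, which is also false: in the radius-$1$ example above one gets $m_0=1$, $m_1=2$, yet $V(G)$ contains singletons $\{t\}$ for all large $t$; these vertices are swept up by the sets $\C_k$ (neighbours of previously covered vertices), not by $\P([m_M])$. The correct statement, proved in the paper by a careful induction, is $\A_{M-k}\subseteq\E_k$, whence $V(G)=\A_0\subseteq\E_M$. Likewise, your disjointness argument asserts $\C_j\subseteq\P([m_{k-1}])$ for $j<k$, which fails since elements of $\C_j$ contain coordinates larger than $m_{j-1}$ that may exceed $m_{k-1}$; disjointness of $\C_j$ and $\C_k$ needs the unique-representation lemma (the paper's first auxiliary assertion), proved by stripping the top added coordinate and inducting. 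Finally, for part (\ref{part:1-partition-NkS}) your ``canonical predecessor'' map does give a well-defined ancestor, but it does not by itself rule out that a vertex lies in $N^{(j')}(T')$ for a second ball, because membership in $N^{(k)}_S$ is defined via distance in $G$, not via your predecessor map; one must show (as the paper does using down-closedness of the $\E_l$ and the absence of edges between $\E_l$ and $\D_{l+1}\cup\C_{l+2}\cup\D_{l+2}$) that any vertex of $\C_k$ at distance $k-j$ from $T\in\D_j$ is obtained from $T$ by only adding coordinates respecting the thresholds $m_{l-1}$, after which uniqueness applies.
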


            Let $v=(v_S)_{S\in V(G)}$ be a vector of positive weights on the
            vertices of $G$ with norm $1$, satisfying $A(G)v=\lambda_1v$. Define
            \addspace{
            \begin{flalign*}
            &\qquad\alpha_k^2=\sum\limits_{S\in\C_k\cup D_k}v_S^2\quad\text{ for
            }0\le k\le M.&\\
            &\qquad(\beta_{k,S})^2=\sum\limits_{T\in
                N^{(k)}_S}v_{T}^2\quad\text{ for }0\le k< M \text{ and }S\in
                \D_k.&
            \end{flalign*}
            }
            By Parts (\ref{part:1-partition-NkS}) and
            (\ref{part:2-partition-Ck-Dk}) above,
            $\sum\limits_{0\le k<M}\sum\limits_{S\in
            \D_k}(\beta_{k,S})^2\le 1$ and $\sum\limits_{k=0}^{M}\alpha_k^2=1$.
            Thus, by Part (\ref{part:3-edges}),
            \begin{align}\label{eqn:ev-estimate}
            \begin{split}
                \lambda_1(G) \eqS & 
                \langle A(G)v,v\rangle \\
                \leS & \sum\limits_{k=0}^{M}\Big\langle A(G[\C_k\cup \D_k])v,\,v\Big\rangle+
                \sum\limits_{k=0}^{M}\sum\limits_{S\in \D_k}\Big\langle
                A(G[N^{(k)}_S])v,v\Big\rangle \\
                \leS &\sum\limits_k\alpha_k^2\cdot\lambda_1(G[\C_k\cup \D_k])+
                \sum\limits_{k,S}(\beta_{k,S})^2\cdot\lambda_1(G[N^{(k)}_S])\\
                \leS &\max_{k}\lambda_1(G[\C_k\cup \D_k])+\max_{k,S}\lambda_1(G[N^{(k)}_S])
            \end{split}
            \end{align}

            By Part (\ref{part:4-max-degree}) of Proposition \ref{prop:clarify},
            the maximum degree of $G[\C_k\cup \D_k]$ is at most $\eps d$. Since
            $V(G)$ is compressed, the largest set in $V(G)$ has size at most
            $\log n$.  Recall that $n = \Theta\left(\binom{d}{i}\right)$, thus
            $\log n = (1 + o(1))i \log(d / i)$.  We conclude the following upper
            bound by Claim \ref{claim:trivial-bound-ev}.
            \begin{equation} \label{eqn_upper_bound_Ck_Dk}
            \lambda_1(G[\C_k\cup \D_k])\leS 2\sqrt{\eps d\log n}\eqS 2(1+o(1))\sqrt{\eps di\log (d/i)}.
            \end{equation}

            Let us treat first the case where $i \rightarrow \infty$.  By
            Theorem \ref{thm:hamming-eval}, using the monotonicity of the
            largest eigenvalue of a graph,
            \begin{equation} \label{eqn_upper_bound_NkS}
            \lambda_1(G[N^{(k)}_S])\le \lambda_1(H^{M-k}_{d})\le\lambda_1(H^{M}_{d})=2(1+o(1))\sqrt{M(d-M)}.
            \end{equation}
            Substituting Inequalities (\ref{eqn_upper_bound_Ck_Dk}) and
            (\ref{eqn_upper_bound_NkS}) into the Inequality
            (\ref{eqn:ev-estimate}), it follows that
            \begin{equation}\label{eqn_eignevalue}
            \lambda_1(G)\le 2(1+o(1))\Big(\sqrt{\eps di\log (d/i)}+\sqrt{M(d-M)}\Big).
            \end{equation}

            The following claim will imply that we can choose $\eps$ so as to
            make the above upper bound arbitrarily close to $\lambda_1(H^i_d)$.
            \begin{claim} \label{claim:bound-on-depth}
                Let $\alpha>0$ and set $\eps=\frac{\alpha}{\log(d/i)}$.
                Then $M\le (1+o(1))i$.
            \end{claim}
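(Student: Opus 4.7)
The plan is to track the sizes $a_k := |\A_k|$ and show they decay geometrically fast; since $\A_M \neq \emptyset$ forces $a_M \ge 1$ while $a_0 \le n$, such a decay will immediately bound $M$ in terms of $\log n$ and $\log(1/\gamma)$ for the decay ratio $\gamma$.

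The decay follows from two ingredients. First, every vertex in $\A_k$ has at least $\eps d$ neighbours in $\A_{k-1}$, and since $\A_k \subseteq \A_{k-1}$, each such edge lies in $G[\A_{k-1}]$; double counting yields $2e(G[\A_{k-1}]) \geq a_k \eps d$. Second, by Hart's edge-isoperimetric inequality for the cube \cite{isoperimetric3}, any $U \subseteq V(Q_d)$ of size $m$ satisfies $e(Q_d[U]) \leq \tfrac{m}{2}\log_2 m$ (tight for subcubes when $m$ is a power of $2$). Applied to $\A_{k-1}$, this gives $e(G[\A_{k-1}]) \leq \tfrac{a_{k-1}}{2}\log_2 n$. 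Combining the two,
\[
    a_k \;\leq\; \gamma\, a_{k-1} \qquad \text{where } \gamma := \frac{\log_2 n}{\eps d}.
\]

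Iterating and using $a_0 \leq n$, $a_M \geq 1$, we obtain $M \leq \log_2 n / \log_2(1/\gamma)$. To finish, we evaluate both quantities. Since $n = O\bigl(\binom{d}{\leq i}\bigr)$ and $i = o(d)$ forces $\log_2(d/i) \to \infty$, standard estimates give $\log_2 n = (1+o(1))\, i\log_2(d/i)$. With $\eps = \alpha/\log(d/i)$,
\[
    \gamma \;=\; \frac{(1+o(1))\, i\log_2^2(d/i)}{\alpha d} \;=\; o(1),
\]
so $\log_2(1/\gamma) = (1-o(1))\log_2(d/i)$. Substituting,
\[
    M \;\leq\; \frac{(1+o(1))\, i\log_2(d/i)}{(1-o(1))\log_2(d/i)} \;=\; (1+o(1))\, i,
\]
as required.

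The only potential snag is the precise form of the edge-isoperimetric bound used above: $e(U) \leq (|U|/2)\log_2|U|$ is sharp for subcubes at power-of-two sizes, and any constant-factor loss at intermediate sizes is absorbed harmlessly into the $o(1)$ error in the final step. All other steps are routine manipulations.
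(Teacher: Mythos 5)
Your proof is correct, and it takes a genuinely different route from the paper's. The paper argues locally: it fixes a vertex $S\in\A_N$ with $N=(1+\beta)i$ and grows its neighbourhood downward level by level, each vertex of $\A_{N-k}$ having at least $\eps d$ neighbours one level down, which yields $n\ge 1+D+\binom{D}{2}+\dots+\binom{D}{N}=\left|[D]^{(\le N)}\right|$ with $D=\eps d$; comparing this with $n=O\!\left(\left|[d]^{(\le i)}\right|\right)$ via Stirling-type estimates gives a contradiction for every fixed $\beta>0$. You instead argue globally: the minimum-degree condition defining $\A_k$ plus double counting gives $a_k\,\eps d\le 2e(G[\A_{k-1}])$, and the edge bound $e(Q_d[U])\le\tfrac{|U|}{2}\log_2|U|$ (Hart's edge-isoperimetry; equivalently Lemma \ref{lem:cubes-smooth-bound} of the paper with subcube dimension $1$, so you need not even go outside the paper) forces geometric decay $a_k\le\gamma a_{k-1}$ with $\gamma=\log_2 n/(\eps d)=o(1)$, whence $M\le\log_2 n/\log_2(1/\gamma)=(1+o(1))i$. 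Your version avoids both the ``new neighbours'' counting and the binomial asymptotics, and it is uniform in $i$ (covering constant $i$ as well, since it gives $M\le i+o(1)$); the paper's version is more hands-on and delivers the explicit form $M\le(1+\beta)i$ for each fixed $\beta$. One small caution: since the hypothesis is only $n=O\!\left(\left|[d]^{(\le i)}\right|\right)$, you should state $\log_2 n\le(1+o(1))\,i\log_2(d/i)$ as an upper bound rather than an equality, but that is exactly the direction your argument uses (both in bounding $\gamma$ and in the final quotient), so nothing breaks.
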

            \begin{proof}
                For arbitrary $\beta >0$ we show that $M\le (1+\beta)i$ for large
                enough $d$.  Let $N=(1+\beta)i$ and $D=\eps d$.  We need to show
                that $\A_N=\emptyset$.  Assuming the contrary, let $S\in \A_N$.
                Then $S$ has at least $D$ neighbours in $\A_{N-1}$, which in turn
                have at least $\binom{D}{2}$ new neighbours in $\A_{N-2}$ and so on.
                It follows that $n=|V(G)|\ge
                1+D+\binom{D}{2}+\ldots+\binom{D}{N}=\left|[D]^{(\le N)}\right|$.
                Recall that $i = o(d)$ and note that $\frac{N}{D} = \frac{1 +
                \beta}{\alpha}\cdot \frac{\log(d/i)}{d/i} = o(1)$, i.e.~$N = o(D)$.
                Thus,
                \begin{align*}
                \left|[D]^{(\le N)}\right|=(1+o(1))\frac{1}{\sqrt{2\pi N}}\left(\frac{eD}{N}\right)^N.
                \end{align*}
                On the other hand,
                \begin{align*}
                n\le c\left|[d]^{(\le i)}\right|=(1+o(1))\frac{c}{\sqrt{2\pi i}}\left(\frac{ed}{i}\right)^i.
                \end{align*}
                Combining the two inequalities, we obtain the following.
                \addspace{
                \begin{align*}
                 \frac{c}{\sqrt{i}} \left(\frac{ed}{i}\right)^i
                \geS & (1+o(1))\frac{1}{\sqrt{N}}\left(\frac{eD}{N}\right)^N \\
                \eqS & (1+o(1))\frac{1}{\sqrt{(1+\beta)i}}\left(\frac{e\alpha}{1+\beta}\cdot\frac{d/i}{\log(d/i)}\right)^{(1+\beta)i}.
                \end{align*}
                }
                We obtain the following inequality, where $c_1, c_2$ are
                constants depending on $\alpha, \beta, c$.
                \begin{align*}
                c_2\ge \Big(c_1\frac{(d/i)^{\frac{\beta}{1+\beta}}}{\log(d/i)}\Big)^{(1+\beta)i}
                \end{align*}
                Since $i=o(d)$, we have $\log(d/i)=o\left((d/i)^\gamma\right)$
                for every fixed $\gamma>0$ and we have reached a contradiction.
                This implies that $M\le (1+\beta)i$ for large $d$.
            \end{proof}

            By Inequality (\ref{eqn_eignevalue}) with
            $\eps=\frac{\alpha}{\log(d/i)}$ we have
            \begin{align*}
            \lambda_1(G)\leS & 2(1+o(1))(\sqrt{\alpha i d}+\sqrt{i(d-i)}) \\ \eqS & 2(1+\sqrt{\alpha})(1+o(1))\sqrt{i(d-i)}.
            \end{align*}

            Since $\alpha$ can be taken arbitrarily close to $0$, it follows that
            \begin{equation*}
            \lambda_1(G)\le 2(1+o(1))\sqrt{i(d-i)}=(1+o(1))\lambda_1(H^{i}_{d}).
            \end{equation*}
            This completes the proof of Theorem
            \ref{thm:hamming-best-asym} in case $i=\omega(1)$.

            It remains to consider the case where $i$ is constant.  Take $\eps =
            2id^{-1/(i + 1)}$. It is easy to check that $\binom{\eps d}{i + 1} >
            n$, implying that $M \le i$ similarly to the proof of Claim
            \ref{claim:bound-on-depth}.  It follows from Inequalities
            (\ref{eqn:ev-estimate}), (\ref{eqn_upper_bound_Ck_Dk}) and Claim
            \ref{claim:trivial-bound-ev} that 
            \begin{equation*}
            \lambda_1(G)\leS 
            O\left(\sqrt{d^{1 - \frac{1}{i + 1}} \log d} \; \right)+\lambda_1(H^i_d) \eqS 
            (1 + o(1))\lambda_1(H^i_d),
            \end{equation*}
            completing the proof of Theorem \ref{thm:hamming-best-asym}.
        \end{proof}

    \subsection{Proof of Proposition \ref{prop:clarify}}
    In order to complete the proof of Theorem
    \ref{thm:hamming-best-asym}, it remains to prove Proposition
    \ref{prop:clarify}. 
    \propClarify*

    \begin{proof}
        Recall the definition of the sets $\A_k, \B_k, \C_k, \D_k, \E_k$.
        \begin{flalign*}
        &\qquad \A_0=V(G).&\\
        &\qquad \A_k=\{S\in \A_{k-1}:S\text{ has at least }\epsilon d\text{
        neighbours in }\A_{k-1} \}.&
        \end{flalign*}

        For $k=0$,
        \begin{flalign*}
        &\qquad\mathcal{B}_0=\{t\in [d]: \{t\}\in \A_M \}\cup\{1\}&\\
        &\qquad m_0=\max\mathcal{B}_0&\\
        &\qquad\C_0=\emptyset&\\
        &\qquad\E_0=\D_0=\mathcal{P}([m_0])\cap V(G)&
        \end{flalign*}

        %\item
        For $0<k\le M$,
        \begin{flalign*}
        &\qquad\mathcal{B}_k=\{t>m_{k-1}+1: \{m_0+1,\ldots,m_{k-1}+1,t\}\in
        \mathcal{A}_{M-k}\}\cup \{m_{k-1} + 1\}& \\ &\qquad m_k=\max\B_k&\\
        &\qquad\mathcal{C}_k=\{S\cup \{t\}: S\in \C_{k-1}\cup \D_{k-1}, t>m_{k-1}\}& \\
        &\qquad\mathcal{D}_k=(\mathcal{P}([m_k]) \cap V(G))\setminus(\E_{k-1}\cup \C_k)&\\
        &\qquad\mathcal{E}_k=\C_0\cup\ldots\cup\C_k\cup\D_0\cup\ldots\cup \D_k& 
        \end{flalign*}

        We prove the following assertions.
        \begin{enumerate}
        \item\label{item:1-unique-rep}
        For every $S\in \C_k\cup \D_k$ there is a unique $j\le k$ and a unique $T\in D_j$, such that there exist distinct $t_{j+1},\ldots,t_k$ satisfying $t_l>m_{l-1}$ and $S=T\cup\{t_{j+1},\ldots,t_k\}$.
        \item\label{item:2-compressed}
        $\E_k$ is compressed.
        \item\label{item:3-edges}
        There are no edges of $G$ between $\E_k$ and $\D_{k+1}\cup\C_{k+2}\cup \D_{k+2}$.
        \item\label{item:4-partition-Ck-Dk}
        $(\C_k\cup \D_k)\cap \E_{k-1}=\emptyset$.
        \item\label{item:5-max-degree}
        The maximum degree of $G[\C_k\cup \D_k]$ is at most $m_k\le \eps d$.
        \item\label{item:6-everything-covered}
        $\A_{M-k}\subseteq\E_k$.
        In particular, $\E_M=V(G)$.
        \item\label{item:7-partition-NkS}
        The sets $(N^{(k)}(S))_{0\le k\le M-1, S\in \D_k}$ are pairwise disjoint.
        \end{enumerate}

        Note that Proposition \ref{prop:clarify} follows from these assertions.
        Indeed, Part (\ref{part:2-partition-Ck-Dk}) follows from Assertions
        (\ref{item:4-partition-Ck-Dk}) and (\ref{item:6-everything-covered}),
        Part (\ref{part:3-edges}) follows from Assertions (\ref{item:3-edges})
        and (\ref{item:6-everything-covered}) and Parts
        (\ref{part:1-partition-NkS}) and (\ref{part:4-max-degree}) are among
        these assertions.

        \paragraph*{Proof of Assertion (\ref{item:1-unique-rep}).}
            We prove Assertion (\ref{item:1-unique-rep}) by induction on $k$.  It is
            trivial for $k=0$, so we assume $k>0$.  Let $S\in \C_k\cup\D_k$.  Assume
            that $S=T\cup\{t_{j+1},\ldots,t_k\}=R\cup\{r_{l+1},\ldots,r_k\}$, where
            $T\in \D_j$, $R\in \D_l$ and $t_u, r_u>m_{u-1}$ for all $u$.  We show
            that we must have $j = l$ and $T = R$.

            Note that if $j=l=k$, there is nothing to prove.  If $j<k$ it follows
            from the definitions that $S\in \C_k$, thus $S\notin\D_k$ and so $l<k$.
            By the definitions, there is $s\in S$ with $s>m_{k-1}$ such that $S
            \setminus \{s\}\in \C_{k-1}\cup \D_{k-1}$.  Since $T\subseteq [m_j]$ and
            $R\subseteq [m_l]$ it follows that $s\in
            \{t_{j+1},\ldots,t_k\}\cap\{r_{l+1},\ldots,r_k\}$.  Without loss of
            generality, $s=r_k=t_k$.  It follows  that
            $T\cup\{t_{j+1},\ldots,t_{k-1}\}=R\cup\{r_{l+1},\ldots,r_{k-1}\}\in
            \C_{k-1}\cup \D_{k-1}$.  By induction, $j=l$ and $R=T$.

        \paragraph*{Proof of Assertion (\ref{item:2-compressed}).}
            Again we prove the assertion by induction on $k$.  For $k=0$ it
            follows from the definition of $\C_0,\D_0$ and the assumption that
            $G$ is compressed.  Let $k>0$, $S\in \C_k\cup\D_k$ and choose $a\in
            S$, $b<a$ such that $b\notin S$ (if such $b$ exists).  Let $T=S
            \setminus \{a\}$ and $R=S\triangle\{a,b\}$.  To prove the assertion
            we show that $R,T\in \E_k$.

            If $S\in \D_k$, the claim follows directly from the definition of
            $\D_k$ and the fact that $G$ is compressed.  Thus we assume $S\in
            \C_k$, so we can write $S=S_1\cup\{s\}$ where $s>m_{k-1}$ and $S_1\in
            \C_{k-1}\cup \D_{k-1}$.  If $a\neq s$, by induction we have
            $R \setminus \{s\}, T\setminus \{s\}\in \E_{k-1}$, thus $R,T\in
            \E_k$, so we assume $a=s$.  Then clearly $T=S_1\in \C_{k-1}\cup
            \D_{k-1}$. It remains to show that $R\in \E_k$.

            Let $S=S_2\cup\{s_{j+1},\ldots,s_k\}$, be a representation of $S$ as
            in Assertion (\ref{item:1-unique-rep}) and assume $s=s_k$.  If $b\le
            m_j$, it follows that $S_2\cup\{b\}\in \E_j$ and thus $S=(S_2\cup
            \{b\})\cup \{s_{j+1},\ldots,s_{k-1}\}\in \E_{k-1}$.  It remains to
            consider the case $b>m_j$.  Let $s_{j+1}'<\ldots<s_k'$ be such that
            $\{ s_{j+1}',\ldots,s_k' \}=\{ s_{j+1},\ldots,s_{k-1},b \}$.  If
            $s_u'>m_{u-1}$ for every $j+1\le u\le k$ then $R\in \C_k$.
            Otherwise let $l=\max\{u:s_u'\le m_{u-1}\}$ and $S_3=S_2\cup
            \{s_{j+1}',\ldots,s_l'\}$. Since $S_3\subseteq [m_{l-1}]$ it follows
            that $S_3\in \E_{l-1}$ and $R=S_3\cup\{s_{l+1},\ldots,s_k\}\in
            \E_{k-1}$.

        \paragraph*{Proof of Assertion (\ref{item:3-edges}).}
            Let $S\in \E_k$ and $T$ be a neighbour of $S$ in $G$.  We show that
            $T\in \E_k\cup \C_{k+1}$, implying that there are no edges of $G$
            between $\E_k$ and $\D_{k+1}\cup\C_{k+2}\cup \D_{k+2}$.  If
            $T\subseteq S$, it follows from Assertion (\ref{item:2-compressed})
            that $T\in \E_k$.  So we assume $T=S\cup \{t\}$ and set $s=\max S$.
            If $t>m_k$, then $T\in \C_{k+1}$.  If $s,t\le m_k$, then
            $T\subseteq[m_k]$, so $T\in \E_k$.  Finally, we consider the case
            $t<m_k\le s$.  Since $\E_k$ is compressed, it follows that $T
            \setminus \{s\}=S\triangle\{s,t\}\in \E_k$.  This implies $T\in
            \E_k\cup \C_{k+1}$.

        \paragraph*{Proof of Assertion (\ref{item:4-partition-Ck-Dk}).}
            From the definitions it follows that $\E_{k-1}\cap\D_k=\emptyset$.
            Since $\D_0\cup\ldots\cup\D_{k-1}\subseteq\mathcal{P}([m_{k-1}])$,
            it follows $\C_k\cap(\D_0\cup\ldots\cup\D_{k-1})=\emptyset$.  Thus
            it remains to show that $\C_j\cap \C_k=\emptyset$ for  $0\le j<k$.
            We prove this by induction on $k$.  For $k=0$ there is nothing to
            prove.  Assume $0\le j<k$ and $S\in \C_k\cap\C_j$.  Write $s=\max
            S$.  By considering the representations of $S$ as in Assertion
            (\ref{item:1-unique-rep}), it is easy to see that $S\setminus
            \{s\}\in (\C_{j-1}\cup\D_{j-1})\cap(\C_{k-1}\cup\D_{k-1})$.  As
            explained above this implies that $S \setminus \{s\}\in
            \C_{j-1}\cap\C_{k-1}$, contradicting the induction hypothesis.

        \paragraph*{Proof of Assertion (\ref{item:5-max-degree}).}
            Let $S,T\in \C_k\cup\D_k$ and $t\in [d]$ be such that
            $T=S\cup\{t\}$.  If $t>m_k$, it follows from the definitions that
            $T\in \C_{k+1}$, contradicting
            $\C_{k+1}\cap(\C_k\cup\D_k)=\emptyset$.  Thus $t\le m_k$, implying
            that the maximum degree of $G[\C_k\cup\D_k]$ is at most $m_k$.

            We now prove by induction on $k$ that $m_k\le \eps d$.  Recall that
            by the definition of $M$, the maximum degree of $G[\A_M]$ is at most
            $\eps d$.  Thus for $k=0$ we have
            $m_0=\max(1,\deg_{G[\A_M]}(\emptyset))\le \eps d$.  Now let $k>0$
            and $S=\{m_0+1,\ldots,m_{k-1}+1\}$.  It follows from the definition
            of $\B_{k-1}$ that $S\notin \A_{M-(k-1)}$, so
            $\deg_{G[A_{M-k}]}(S)\le \eps d$.  Since $\A_{M-k}$ is compressed,
            $m_k\le\max(\deg_{G[A_{M-k}]}(S), m_{k-1})\le \eps d$.

        \paragraph*{Proof of Assertion (\ref{item:6-everything-covered}).}
            Let $S\in \A_{M-k}$.  Note that if $|S|\le k$ it can be  easily
            shown by induction that $S\in \E_{k}$.  Thus we assume $|S|\ge k+1$.
            Define $t_k=\max S$ and for $0\le j<k$, denote $t_j = \max(S
            \setminus \{t_{j + 1}, \ldots, t_k \})$.  Assume first that
            $m_j<t_j$ for every $0\le j<k$.  Since $\A_{M-k}$ is compressed, it
            follows that $\{m_0+1,\ldots,m_{k-1}+1,t_k\}\in \A_{M-k}$. Thus
            $t_k\le m_k$, $S\subseteq[m_k]$ and $S\in \E_k$.  Otherwise, let
            $l\ge 0$ be maximal such that $t_l\le m_l$. It follows from the
            definitions that $S\cap [m_l]\in \E_l$ and $S\in \E_k$.

        \paragraph*{Proof of Assertion (\ref{item:7-partition-NkS}).}
            We show that for every $k,j$  if $S\in \C_k$, $T\in \D_j$ are such that
            $S\in N_{k-j}(T)$ then there exist $t_{j+1},\ldots,t_k$ such that
            $S=T\cup \{t_{j+1},\ldots,t_k\}$ and $t_l>m_{l-1}$ for every $j<l\le k$.
            This proves that the sets $(N^{(k)}(S))_{0\le k\le M-1, S\in \D_k}$ are
            pairwise disjoint using Assertion (\ref{item:1-unique-rep}).

            By Assertions (\ref{item:2-compressed}) and (\ref{item:3-edges}) the
            sets $\E_l$ are down-closed and there are no edges between $\E_l$ and
            $\E_{l+2}$. Thus, since  $S\in N_{k-j}(T)$, $S$ is obtained by adding
            $k-j$ elements to $T$, and we can write $S=T\cup
            \{t_{j+1},\ldots,t_k\}$.  Assuming that $t_{j+1}<\ldots<t_k$ and that
            there exists $j+1\le l\le k$ such that $t_l\le m_{l-1}$, we define $r$
            to be the maximal such $l$.  It follows that $T\cup
            \{t_{j+1},\ldots,t_r\}\in \E_{r-1}$ and thus $S\in \E_{k-1}$,
            contradicting our assumptions.
    \end{proof}
    The proof of Proposition \ref{prop:clarify} completes the proof of our first
    main result, Theorem \ref{thm:hamming-best-asym}.

\section{Hamming ball is best for fixed $i$}\label{sec:fixed-radius}
    In this section we prove Theorem \ref{thm:fixed-radius}, whose statement is
    as follows.
    \thmFixedRadius*

    Let us start with an outline of the proof.  We are given a graph $G$ that
    maximises the largest eigenvalue among subgraphs of $Q_d$ with $|H_d^i|$
    vertices. As usual, we assume the graph and the eigenvector $v$ with
    eigenvalue $\lam_1(G)$ are compressed. Using the proof of Theorem
    \ref{thm:hamming-best-asym}, we conclude that by removing the
    vertices of level $i+1$ and higher, the largest eigenvalue does not decrease by
    much. We infer that $G$ has to contain almost all vertices levels $i$ or
    less.  By assuming that $G$ maximises $\lam_1$, given an eigenvector, we
    know that moving weight from a vertex of level $i+1$ or higher into level
    $i$ can only decrease the inner product $\langle A(G)v,v \rangle$, enabling
    us to obtain a lower bound on the weight of a vertex at the highest non
    empty level. Finally, using the relations between the weights of vertices
    and their neighbourhoods, and the fact that there are few vertices in level
    $i+1$ or higher, we reach a contradiction to the assumption that $v$ is
    compressed, by concluding that there is a vertex of weight higher than the
    weight of the empty set.

    We now proceed to the proof of the theorem.  
    \begin{proof}[ of Theorem
        \ref{thm:fixed-radius}] 
        Let $G$ be a subgraph of $Q_d$ with $|H_d^i|$
        vertices and assume $\lam\triangleq\lambda_1(G)$ is maximal among
        subgraphs of $Q_d$ with the same number of vertices.  Let $v=(v_S)_{S\in
        V(G)}$ be a positive vector of norm $1$ giving $\lambda=\langle
        A(G)v,v\rangle$.  By Lemmas \ref{thm:down-closed} and
        \ref{lem:ij-compressed}, we can assume that $V(G)$ and $v$ are
        compressed.

        We first show that under the above assumptions, the graph obtained from $G$
        by removing vertices of level $i+1$ or more still has a large maximal
        eigenvalue.

        \begin{claim} \label{claim:big-eval-fst-i-levels} Let $U=V(G)\cap [d]^{(\le
            i)}$.  There exists $\eta=\eta(i)>0$ such that $\lambda_1(Q_d[U])\ge
            \lambda_1(H_d^i)-O(d^{1/2-\eta})$.  
        \end{claim}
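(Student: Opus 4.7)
The plan is to apply the partition of Proposition \ref{prop:clarify} with $\eps = 2id^{-1/(i+1)}$, identify that the largest-eigenvalue piece must be some $N^{(0)}_{S^*}$ with $S^* \in \D_0$, and then use the compressedness of $V(G)$ to realize this piece inside $U$ via an order-preserving relabelling of the coordinates.

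As in the constant-$i$ case of Theorem \ref{thm:hamming-best-asym}, the choice $\eps = 2id^{-1/(i+1)}$ forces $M \le i$, and by Inequality (\ref{eqn:ev-estimate}) together with Claim \ref{claim:trivial-bound-ev} one gets $\lambda \le O(d^{1/2-\eta}) + \max_{k,\,S \in \D_k} \lambda_1(G[N^{(k)}_S])$ for some $\eta = \eta(i) > 0$. Since $\lambda \ge \lambda_1(H_d^i)$ by maximality of $G$, the maximum above is at least $\lambda_1(H_d^i) - O(d^{1/2-\eta})$. For any $(k,S)$ with $k \ge 1$ the term $\lambda_1(G[N^{(k)}_S])$ is bounded by $\lambda_1(H_d^{M-k}) \le \lambda_1(H_d^{i-1})$, and by Lemma \ref{lem:calc-evals-ham-balls} the gap $\lambda_1(H_d^i) - \lambda_1(H_d^{i-1})$ is $\Theta(\sqrt d)$, which eventually exceeds $O(d^{1/2-\eta})$; hence the maximum must be attained at some $(0, S^*)$ with $\lambda_1(G[N^{(0)}_{S^*}]) \ge \lambda - O(d^{1/2-\eta})$.

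By the definition of the $\C_k$'s, every element of $N^{(0)}_{S^*}$ has the form $S^* \cup T$ with $T \subseteq (m_0, d]$ and $|T| \le M \le i$. Let $\pi : [d] \setminus S^* \to [d - |S^*|]$ denote the order-preserving bijection (so $\pi(t) = t - |S^*|$ for $t \in (m_0, d]$) and set $H = \{T : S^* \cup T \in N^{(0)}_{S^*}\}$. Applying iteratively the $C_{t', t}$-compressions with $t' < t$, a short induction on $|T \triangle \pi(T)|$ shows that $T \in V(G)$ forces $\pi(T) \in V(G)$; combined with $|\pi(T)| \le i$ this gives $\pi(H) \subseteq U$. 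The map $S^* \cup T \mapsto \pi(T)$ is a graph isomorphism from $G[N^{(0)}_{S^*}]$ onto the induced subgraph $Q_d[\pi(H)]$ of $Q_d[U]$, yielding
\[
\lambda_1(Q_d[U]) \ge \lambda_1(Q_d[\pi(H)]) = \lambda_1(G[N^{(0)}_{S^*}]) \ge \lambda - O(d^{1/2-\eta}) \ge \lambda_1(H_d^i) - O(d^{1/2-\eta}).
\]

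The main obstacle is the shifting claim that $T \in V(G)$ implies $\pi(T) \in V(G)$. One proves the more general statement that, whenever $V$ is $C_{i,j}$-compressed, $T \in V$, and $T^\circ \subseteq [d]$ admits a bijection $\sigma : T \to T^\circ$ with $\sigma(t) \le t$ for all $t \in T$, then $T^\circ \in V$. Proceeding by induction on $|T \triangle T^\circ|$, one picks some $t^\circ \in T^\circ \setminus T$ together with its preimage $t \in T \setminus T^\circ$ under $\sigma$; then $t^\circ < t$, the $C_{t^\circ, t}$-compression produces $T - t + t^\circ \in V$, and the symmetric difference drops by two. This covers the order-preserving $\pi$ above, which satisfies $\pi(t) \le t$ for every $t \in (m_0, d]$, completing the argument.
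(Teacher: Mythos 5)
Your proof takes essentially the same route as the paper's: decompose via Proposition \ref{prop:clarify} with $\eps = 2id^{-1/(i+1)}$ so that $M \le i$, bound the $G[\C_k\cup\D_k]$ pieces by $O(d^{1/2-\eta})$ via Claim \ref{claim:trivial-bound-ev}, and argue that the $G[N^{(k)}_S]$ pieces embed into $Q_d[U]$ because $V(G)$ is compressed; the paper leaves this last embedding implicit (``because $G$ is compressed'') and you make it explicit with the order-preserving shift. Two small remarks. First, the detour through $k=0$ via Lemma \ref{lem:calc-evals-ham-balls} is unnecessary: for any $k$ and $S\in\D_k$, every added coordinate $t$ of a set in $N^{(k)}_S$ satisfies $t>m_k\ge|S|$, so the same shift $t\mapsto t-|S|$ lands in $[d]$ and gives $\lambda_1(G[N^{(k)}_S])\le\lambda_1(Q_d[U])$ for all $k$ directly, which is what the paper uses. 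Second, the induction step in your shifting lemma has a slip: for $t^\circ\in T^\circ\setminus T$ the preimage $\sigma^{-1}(t^\circ)$ need not lie in $T\setminus T^\circ$ (e.g.\ $T=\{2,3\}$, $T^\circ=\{1,2\}$, $\sigma(t)=t-1$, $t^\circ=1$, $\sigma^{-1}(1)=2\in T\cap T^\circ$). The lemma is still true and the fix is routine --- normalize $\sigma$ to be the identity on $T\cap T^\circ$, or sort $T=\{t_1<\dots<t_m\}$ and $T^\circ=\{t_1^\circ<\dots<t_m^\circ\}$ and take the least $j$ with $t_j^\circ<t_j$, observing that then $t_j^\circ\notin T$ --- but as written the step does not quite go through.
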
 
        \begin{proof} We use the
            proof of Theorem \ref{thm:hamming-best-asym}.  Consider
            Inequality (\ref{eqn:ev-estimate}) which states the following.
        \begin{align*}
        \lambda_1(G)\le \max_{k}\lambda_1(G[\C_k\cup \D_k])+\max_{k,S}\lambda_1(G[N^{(k)}_S]).
        \end{align*}

            As explained in the proof of Claim \ref{claim:bound-on-depth}, if $\eps
            = 2id^{-1/(i+1)}$ then $M \le i$, implying that the sets $N_S^{(k)}$ are
            subsets of Hamming balls of radius at most $i$.  It follows that
            $\lam_1(N_S^{(k)})\le \lambda_1(Q_d[U])$, because $G$ is compressed.
            Furthermore, for our choice of $\eps$, we have $$\lam_1(G[\C_k\cup
            \D_k])=O\left(\sqrt{d^{1 - \frac{1}{i + 1}} \log d}\;\right).$$

            Thus for any $\eta < 1/2(i + 1)$ we have
            \begin{align*}
            \lambda_1(G)\le O(d^{1/2-\eta})+\lambda_1(Q_d[U]).
            \end{align*}
            The proof of Claim \ref{claim:big-eval-fst-i-levels} follows from the
            assumption that $\lam_1(G) \ge \lam_1(H_d^i)$.
        \end{proof}

        We conclude that $\left|V(G) \setminus [d]^{(\le i)}\right|$ is small.

        \begin{claim}\label{claim:theta}
            There exists $\theta = \theta(i)>0$ such that $\left|V(G) \setminus
            [d]^{(\le i)}\right|=O(d^{i-\theta})$.
        \end{claim}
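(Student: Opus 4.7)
Let $U = V(G) \cap [d]^{(\le i)}$, $s = |V(G) \setminus [d]^{(\le i)}|$ and $X = V(H_d^i) \setminus U$, so that $|X| = s$ (using $|V(G)| = |H_d^i|$). Since $V(G)$ is down-closed (from the $C_{k,\emptyset}$ compressions), $U$ is down-closed in $V(H_d^i)$ and hence $X$ is up-closed there. The strategy is to show that if $s$ is too large then $\mu := \lambda_1(Q_d[U])$ must drop enough to contradict the lower bound $\mu \ge \lambda_1(H_d^i) - O(d^{1/2-\eta})$ supplied by Claim \ref{claim:big-eval-fst-i-levels}. For clarity I describe the main case $X \subseteq [d]^{(i)}$; the case in which $X$ contains sets of smaller size is handled in the same way, since any such set of size $j<i$ forces $\Omega(d^{i-j})$ of its supersets into $X$ by up-closure and the neighbourhood structure is still easy to control.

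Let $w$ be the Perron eigenvector of $Q_d[U]$ with $\|w\| = 1$. I extend $w$ to $V(H_d^i)$ by placing variable weights $y_S \ge 0$ on each $S \in X$. Since $X \subseteq [d]^{(i)}$ has no internal $H_d^i$-edges, and every $H_d^i$-neighbour of $S \in X$ lies in $U$ at level $i-1$, writing $\beta_S = \sum_{T \sim S,\, T \in U} w_T$ and optimizing the Rayleigh quotient of the extended vector over $y$ gives
\[
    \lambda_1(H_d^i)\bigl(\lambda_1(H_d^i) - \mu\bigr) \;\geS\; \sum_{S \in X} \beta_S^2.
\]
Together with $\lambda_1(H_d^i) = \Theta_i(\sqrt d)$ from Lemma \ref{lem:calc-evals-ham-balls}, this forces $\sum_{S \in X} \beta_S^2 = O(d^{1-\eta})$.

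For the matching lower bound $\sum_{S \in X} \beta_S^2 = \Omega_i(s/d^{i-1})$, let $v$ be the Perron eigenvector of $H_d^i$, so that $v_T = x_{|T|}$ and $x_{i-1} = \Theta_i(d^{-(i-1)/2})$. The spectral gap of $A(H_d^i)$ is $\Omega_i(\sqrt d)$ (from Lemma \ref{lem:calc-evals-ham-balls} in the $S_d$-symmetric subspace, plus a direct check that non-symmetric $S_d$-isotypic components yield smaller top eigenvalues by the same order). A Rayleigh-quotient / Davis--Kahan argument then gives $\|w - \alpha v\|^2 = O(d^{-\eta})$ for some $\alpha = 1 - o(1)$, and a Chebyshev estimate shows that at most $O(d^{i-1-\eta})$ sets $T \in [d]^{(i-1)}$ have $w_T < x_{i-1}/4$. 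Writing $\delta_T = |\{S \in X : T \subset S\}|$, we have $\sum_T \delta_T = is$ and $\delta_T \le d$, so the "bad" $T$'s contribute at most $O(d^{i-\eta})$ to $\sum_T \delta_T$. Assuming for contradiction that $s \ge C d^{i-\eta}$ with $C$ large, the "good" $T$'s contribute $\Omega_i(s x_{i-1})$ to $\sum_T \delta_T w_T$, and Cauchy--Schwarz gives $\sum_{S \in X} \beta_S^2 \ge (\sum_T \delta_T w_T)^2/s = \Omega_i(s/d^{i-1})$. Comparison with the earlier upper bound yields the contradiction $s = O(d^{i-\eta})$, so the claim holds with $\theta = \eta$. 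The main obstacle is the entry-wise control of $w$ at level $i-1$: $L^2$-closeness of $w$ to $v$ via Davis--Kahan does not by itself pin down the tiny values $w_T \sim d^{-(i-1)/2}$, but the shadow-averaging performed inside the Cauchy--Schwarz step is exactly what converts the $L^2$ bound into the required pointwise information.
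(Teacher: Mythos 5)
Your route is genuinely different from the paper's: the paper uses the $C_{i,j}$ compressions to sandwich $V(G)\cap[d]^{(i)}$ between $A^{(i)}$ and the sets meeting $A$, writes $|A|=(1-\beta)d$, symmetrises, and then analyses the resulting $(i+1)$-level tridiagonal system via its characteristic polynomial to show $\beta=O(d^{-\eta/2i^2})$; you instead run a spectral stability argument directly. Your main-case ingredients do check out: the test-vector computation indeed gives $\sum_{S\in X}\beta_S^2\le\lambda_1(H_d^i)\bigl(\lambda_1(H_d^i)-\mu\bigr)$ (and in fact this inequality survives internal edges of $X$, since their contribution to the numerator is nonnegative); the spectral gap of $A(H_d^i)$ is indeed $\Omega_i(\sqrt d)$ (in the isotypic component $S^{(d-m,m)}$ the operator is, up to $1+O(1/d)$, $\sqrt d$ times the radius-$(i-m)$ matrix of Lemma \ref{lem:calc-evals-ham-balls}, so its top eigenvalue is $\approx\lambda_{i-m}\sqrt d<\lambda_i\sqrt d$), so the Davis--Kahan/Chebyshev control of $w$ at level $i-1$ is legitimate, and the shadow-averaging plus Cauchy--Schwarz correctly yields $\Omega_i(s/d^{i-1})$ when every $S\in X$ has all its $(i-1)$-subsets in $U$. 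If the general case were handled, your argument would even give a better exponent ($\theta=\eta$) than the paper's.

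The gap is the sentence claiming the general case ``is handled in the same way.'' The difficulty when $X$ contains sets below level $i$ is not that they force supersets into $X$ (that only helps bound $|X_{<i}|$ by $O_i(|X_i|)$); it is that a set $S\in X\cap[d]^{(i)}$ may have its \emph{entire} $(i-1)$-shadow inside $X$, in which case $\beta_S=0$ and $S$ contributes nothing to $\sum\beta_S^2$. Nothing in your proposal controls how many such $S$ there are: double counting only gives that their number is at most $\frac{d-i+1}{i}|X_{i-1}|$, which, combined with the local LYM bound $|X_{i-1}|\le\frac{i}{d-i+1}|X_i|$, allows essentially all of $X_i$ to be of this type (e.g.\ when $U$ is a Hamming ball on a much smaller ground set, every $S\in X_i$ with two elements outside that ground set is ``invisible'' to your level-$(i-1)$ accounting). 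So the lower bound $\sum_{S\in X}\beta_S^2=\Omega_i(s/d^{i-1})$, as you derive it, can fail, and with it the contradiction. The natural repair is a descent through the levels: if most of $X_j$ has its full $(j-1)$-shadow in $X$, then $|X_{j-1}|\ge\frac{j}{d-j+1}\cdot(\text{that part})$, so $|X_j|\gtrsim_i d^{j-i}s$ cascades downward until at some level a constant fraction of $X_j$ has a subset in $U$; there each good $S$ contributes $\beta_S^2=\Omega_i(d^{-(j-1)})$, recovering $\Omega_i(s/d^{i-1})$ overall — but this requires the Chebyshev entrywise control of $w$ at every level $j-1$ (and the bad-set count $O(d^{j-1-\eta})$ beaten by $|X_j|\gtrsim_i Cd^{j-\eta}$), none of which is in your write-up. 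As it stands, the proof is incomplete at exactly this point.
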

        \begin{proof}
            Define 
            \begin{align*}
            & A=\big\{a\in[d]\, : \text{there exists } S\in V(G)\cap [d]^{(i)}\text{ such that } a=\min S\big\}\\
            & B=\big\{S\in [d]^{(i)}\, :  S\cap A\neq \emptyset\big\}.
            \end{align*}
            Since $G$ is compressed, it follows that $A^{(i)}\subseteq V(G)\cap
            [d]^{(i)}\subseteq B$.  Write $|A|=(1-\beta) d$ and let $H$  be the
            subgraph of $Q_d$ induced by $[d]^{(<i)}\cup B$.  Note that $V(G)
            \cap [d]^{(\le i)} \subseteq V(H)$.  It follows from Claim
            \ref{claim:big-eval-fst-i-levels} that $\lam_1(H)\ge
            \lam_1(H_d^i)-O(d^{1/2-\eta})$.  We shall conclude that
            $\beta=O(d^{-\theta})$ for some $\theta=\theta(i)>0$.  This implies
            that $\left| V\cap [d]^{(i)} \right| \ge
            \binom{(1-\beta)d}{i}=\binom{d}{i}-O(d^{i-\theta})$, as required.

            Note that by symmetry, the eigenvector of $H$ with eigenvalue
            $\lam_1(H)$ is uniform on vertices from the same level and with the
            same number of elements in $[(1-\beta)d]$.  Let $x_{j,k}$ be the
            weight of a vertex from $[d]^{(j)}$ with $k$ elements in
            $[(1-\beta)d]$ in the eigenvector. Let $y_{j,k}=x_{j,k}d^{i/2}$ and
            denote $\mu=\lam_1(H)d^{-1/2}$.  Consider the following equation.
            %\addspace{
            \begin{align*}
            \mu \, y_{j, k} \eqS &
            (j - k)\, y_{j - 1, k} + k \, y_{j - 1, k - 1} \\
            + \: & 
            \mathbbm{1}_{j < i}\cdot 
            \Big( (1 - \beta + O(1 / d )) \,  y_{j + 1, k + 1} 
            + (\beta - O(1 / d)) \, y_{j + 1, k}\Big). 
            \end{align*}
            %}

            This system of equations, taken for $0 \le j \le i$ and $0 \le k \le
            j$ describes $H_d^i$, so the corresponding maximal eigenvalue is
            $\lam_1(H_d^i)/\sqrt{d}$.

            We obtain a similar system, by dropping the equation for $j = i$ and
            $k = 0$ (and taking $x_{i, 0} = 0$). Denote the corresponding
            eigenvalue by $\mu_{\beta}$.  By monotonicity of the eigenvalue,
            $\mu_{\beta}$ is decreasing with $\beta$.

            Now consider the systems obtained from the above two systems by
            omitting the $O(1 / d)$ terms.  The eigenvalue from the first system
            is the constant $\lam_i$ from Lemma \ref{lem:calc-evals-ham-balls}.
            Let $\nu_{\beta}$ be the eigenvalue of the second system with the
            $O(1/d)$ terms omitted.  Then $|\mu_{\beta} - \nu_{\beta}|=O(1/d)$.
            Furthermore, by monotonicity of the maximal eigenvalue, $\nu_{\beta} <
            \lam_i$ for $0 < \beta < 1$.

            Note that we can conclude that $\beta = o(1)$.  Suppose to the
            contrary that $\beta > c$ where $c > 0$ is a constant.  Then $\lam_i
            - \nu_c = \Omega(1)$ and $\mu_\beta \le \nu_c + O(1/d)$, implying
            that $\mu_{\beta} = \lam_i - \Omega(1)$, a contradiction.

            We now prove a sharper upper bound on $\beta$.  Denote by
            $P_{\beta}(x)$ the characteristic polynomial obtained by the second
            system with the $O(1/d)$ terms omitted.  Then $P_{\beta}(\lam_i)$ is
            a polynomial in $\beta$ which has a root at $\beta = 0$ but is not
            identically $0$.  Thus $P_{\beta}(\lam_i) = \Omega(\beta ^t)$ where
            $t$ is the smallest power of $\beta$ in $P_{\beta}(\lam_i)$ with a
            non zero coefficient. The degree of this polynomial is at most
            $(1+2+\ldots+i)+i\le 2i^2$, thus
            $P_{\beta}(\lam_i)=\Omega(\beta^{2i^2})$.  Note that the derivative
            of $P_{\beta}(x)$ satisfies $\frac{\partial P_{\beta}(x)}{\partial
            x} = O(1)$ for $|x|\le \lam_i$.  It follows that $P_{\beta}(\lam_i)
            - P_{\beta}(\nu_{\beta}) = O(\lam_i - \nu_{\beta})$.  Since
            $P_{\beta}(\nu_{\beta}) = 0$ and $P_{\beta}(\lam_i) =
            \Omega(\beta^{2i ^ 2})$, this implies that $\lam_i -
            \nu_{\beta}=\Omega(\beta^{2i^2})$.

            If $\beta^{2i^2} =O(1/d)$, we are done. Otherwise, this implies that
            $\frac{\lam_1(H_d^i)-\lam_1(H)}{\sqrt{d}}=\Omega(\beta^{2i^2})$.
            Recall that we have
            $\frac{\lam_1(H_d^i)-\lam_1(H)}{\sqrt{d}}=O(d^{-\eta})$.  Thus
            $\beta=O(d^{-\eta/ 2i^2})$, completing the proof.
        \end{proof}

        Let $l=\max\{j:V(G)\cap [d]^{(j)}\neq\emptyset \}$. Assuming that
        $V(G)\neq [d]^{(\le i)}$, we have $l > i$.  Note that since $G$ is
        down-compressed, we have $l = O(\log d)$.

        \begin{claim} \label{claim:val-of-l}
            If $l>i$ then $v_{[l]}=\Omega(v_\emptyset \lam^{-i})$.
        \end{claim}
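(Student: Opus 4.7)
The plan is to combine the maximality of $G$ among subgraphs of $Q_d$ of the prescribed order with a short chain of eigenvalue equations for the Perron vector $v$. The key step is a ``vertex swap'' inequality: for any $S \notin V(G)$, I would form $G' = (G - [l]) + S$, a subgraph of $Q_d$ on $|H_d^i|$ vertices, so $\lambda_1(G') \le \lambda$ by maximality. Plugging in the test vector $v'$ obtained from $v$ by moving the weight $v_{[l]}$ from $[l]$ to $S$ (and zeroing it at $[l]$), and using $(A(G)v)_{[l]} = \lambda v_{[l]}$, a direct expansion of $\langle A(G)v,v\rangle - \langle A(G')v',v'\rangle \ge 0$ collapses to
\[
    \lambda v_{[l]} \;\geS\; \sum_{T \sim S,\; T \in V(G)\setminus\{[l]\}} v_T
    \qquad \text{for every } S \notin V(G).
\]

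Next I would locate a useful $S$. As $|V(G)| = |H_d^i|$ but $[l] \in V(G)$ with $l>i$, the set $V(G)$ fails to contain some member of $[d]^{(\le i)}$; let $k^* \in \{1,\dots,i\}$ be the smallest level at which this happens, and pick any $S^* \in [d]^{(k^*)}\setminus V(G)$. By minimality of $k^*$, every down-neighbour $S^* - j$ of $S^*$ (for $j \in S^*$) lies in $V(G)$, and each such neighbour is at level $k^*-1 < l$ and so differs from $[l]$. Applying the swap inequality with $S = S^*$ therefore gives
\[
    \lambda v_{[l]} \;\geS\; \sum_{j \in S^*} v_{S^*-j} \;\geS\; v_{S^*-j_0}
\]
for any fixed $j_0 \in S^*$.

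To finish I would iterate the eigenvalue equation downwards starting from $S^*-j_0$. Since $v > 0$, one has $\lambda v_T = \sum_{T'\sim T,\,T'\in V(G)} v_{T'} \ge v_{T-j}$ for any $j\in T$ with $T - j \in V(G)$, and down-closedness of $V(G)$ guarantees $T - j \in V(G)$ for every $j$. Chaining $k^*-1$ such inequalities along a decreasing sequence $S^*-j_0 \supsetneq (S^*-j_0)-j_1 \supsetneq \cdots \supsetneq \emptyset$ yields $v_{S^*-j_0} \ge v_\emptyset/\lambda^{k^*-1}$, and combined with the swap bound,
\[
    v_{[l]} \;\geS\; \frac{v_\emptyset}{\lambda^{k^*}} \;\geS\; \frac{v_\emptyset}{\lambda^i},
\]
as required.

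The main obstacle is the swap inequality itself: one must carefully bookkeep the contributions of edges at $[l]$ and at $S$ to $\langle A(G)v,v\rangle$ and $\langle A(G')v',v'\rangle$, in particular treating uniformly the exceptional case where $S$ happens to be a $Q_d$-neighbour of $[l]$. Once that inequality is in hand, the existence of the missing level $k^* \le i$ and the descending chain of length $k^*$ follow immediately from $V(G)$ being down-closed and from each of its level sets being an initial segment in binary order.
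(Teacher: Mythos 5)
Your proposal is correct and follows essentially the same route as the paper: both derive, from the maximality of $G$, that shifting the weight $v_{[l]}$ from $[l]$ to a missing set $S$ of level at most $i$ (chosen so that its down-neighbours lie in $V(G)$) cannot increase the Rayleigh quotient, giving $\lambda v_{[l]} \ge \sum_{j\in S} v_{S-j}$, and both combine this with the downward chain of eigenvalue inequalities showing $v_T \ge v_\emptyset \lambda^{-|T|}$ for $T \in V(G)$. The only cosmetic difference is that you select $S$ by the smallest deficient level while the paper takes a minimal missing element of $[d]^{(\le i)}\setminus V(G)$; the argument is the same.
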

        \begin{proof}
            We first show by induction on $|S|$ that for every $S\in V(G)$ we
            have $v_S\ge v_\emptyset\lam^{-|S|}$.  It is clearly true for
            $S=\emptyset$.  Now let $S\neq\emptyset$ and let $a\in S$, $T=S
            \setminus \{a\}$.  Then $\lam v_S$ is the sum of weights of the
            neighbours of $S$, and in particular $\lam v_S\ge v_T\ge
            v_\emptyset\lam^{-|T|}$.

            Since we assume $l >i$, the set $[d]^{(\le i)} \setminus V(G)$ is
            non empty. Pick a minimal element $S$ in it.  By moving the weight
            $v_{[l]}$ from $[l]$ to $S$, the value of $\langle A(G)v,v\rangle$
            decreases by $v_{[l]}\cdot(2\sum\limits_{j\in[l]}v_{[l] \setminus
            \{j\}}-2\sum\limits_{j\in S}v_{S \setminus \{j\}})$.  This
            amount is non negative, because $G$ has the largest maximal 
            eigenvalue among subgraphs of $Q_d$ with the same number of
            vertices. Hence,
            \begin{align*}
                \lam v_{[l]} = 
                \sum\limits_{j\in[l]}v_{[l] \setminus \{j\}} \ge 
                \sum\limits_{j\in S}v_{S \setminus \{j\}} \ge 
                v_{\emptyset}\lam^{-(|S| - 1)} \ge 
                v_{\emptyset}\lam^{-(i - 1)}.
            \end{align*}
            and Claim \ref{claim:val-of-l} follows.
        \end{proof}

        In the following claim we conclude that
        $v_{[l-i]}=\Omega({v_\emptyset/l^i})=\Omega({v_\emptyset/\log^i d})$.
        \begin{claim} \label{claim:estimate-v-l-i}
            $v_{[l - i]} = \Omega\big(v_{[l]}(\frac{\lam}{l})^i \big) = 
            \Omega({v_\emptyset/l^i})$.
        \end{claim}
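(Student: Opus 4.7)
The plan is to iterate the eigenvalue equation from $[l]$ down to $[l-i]$, using the compression of both $V(G)$ and $v$ to propagate weight at each step. Fix $j\in\{0,1,\ldots,i-1\}$. First, $[l-j]\in V(G)$ since $[l]\in V(G)$ and $V(G)$ is down-closed (being $C_{U,\emptyset}$-compressed for all $U$). For every $k\in\{l-j+1,\ldots,d\}$, applying the $C_{l-j,k}$-compression of $V(G)$ to $[l-j]$ (which contains $l-j$ and avoids $k$) gives $[l-j-1]+\{k\}\in V(G)$, and the corresponding $C_{l-j,k}$-compression of $v$ gives $v_{[l-j-1]+\{k\}}\ge v_{[l-j-1]+\{l-j\}}=v_{[l-j]}$. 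Writing the eigenvalue equation at $[l-j-1]$ and discarding non-negative contributions from neighbours at level $l-j-2$, we obtain
\[
\lambda\, v_{[l-j-1]} \geS \sum_{k=l-j}^{d} v_{[l-j-1]+\{k\}} \geS (d-l)\, v_{[l-j]}.
\]
Iterating from $j=0$ to $j=i-1$ yields $v_{[l-i]}\ge ((d-l)/\lambda)^i\, v_{[l]}$.

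To match the claim we compare $(d-l)/\lambda$ with $\lambda/l$. Since $V(G)\subseteq[d]^{(\le l)}$ and $l=O(\log d)\le d/2$ for large $d$, Claim \ref{claim:trivial-bound-ev} gives $\lambda^2\le 4ld$, whence $(d-l)l\ge dl/2\ge \lambda^2/8$, and therefore $(d-l)/\lambda\ge \lambda/(8l)$. Consequently,
\[
v_{[l-i]}\geS 8^{-i}\left(\frac{\lambda}{l}\right)^{\!i} v_{[l]} \eqS \Omega\!\left(v_{[l]}(\lambda/l)^i\right),
\]
where the implied constant depends only on the fixed $i$. The second equality $\Omega(v_\emptyset/l^i)$ then follows immediately from Claim \ref{claim:val-of-l}, which supplies $v_{[l]}=\Omega(v_\emptyset\lambda^{-i})$.

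The only step requiring care is verifying the two compression facts in the first paragraph — namely that both membership ($[l-j-1]+\{k\}\in V(G)$) and the weight comparison ($v_{[l-j-1]+\{k\}}\ge v_{[l-j]}$) really do follow from the $C_{l-j,k}$-compression applied to $[l-j]$ and its pair. With these in hand, the iteration and the crude upper bound $\lambda^2\le 4ld$ combine without further issue to yield the claim.
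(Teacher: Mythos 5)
Your key step goes the wrong way. In the paper's convention (the one needed for the rest of Section \ref{sec:fixed-radius} to make sense — e.g.\ $[l]=\{1,\dots,l\}$ being the representative vertex at the top level $l$, and, in the last lines of the theorem, the step deducing $v_{[l]}=\Omega(d^{-i/2+\theta/2})$ from the bound on $|V(G)\cap[d]^{(l)}|$, which needs $v_{[l]}$ to be the \emph{largest} weight at level $l$; likewise $\B=[m]$ in Lemma~\ref{lem:hamm-asy-best-rad-one}), a compressed vector satisfies $v_{[l-j]}\ge v_{[l-j-1]+\{k\}}$ for every $k>l-j$, and membership runs $[l-j-1]+\{k\}\in V(G)\Rightarrow[l-j]\in V(G)$, not the reverse. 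In other words, among the up-neighbours of $[l-j-1]$, the vertex $[l-j]$ is the \emph{heaviest}, so after discarding the down-neighbours the eigenvalue equation only yields $\lambda v_{[l-j-1]}\ge v_{[l-j]}$, and an upper bound $\lambda v_{[l-j-1]}\le (l-j)v_{[l-j-2]}+(d-l+j+1)v_{[l-j]}$ in the other direction; neither gives the factor $\Theta(d/\lambda)$ gain per level. Your bound $\lambda v_{[l-j-1]}\ge(d-l)v_{[l-j]}$ would require \emph{all} up-neighbours of $[l-j-1]$ to carry weight comparable to the maximum one, which compression alone does not force.

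This should have been a red flag: your argument uses only compressedness of $(V(G),v)$, and would therefore force Hamming-ball--rate decay $v_{[l-j-1]}\gtrsim(d/\lambda)\,v_{[l-j]}$ for \emph{every} compressed eigenvector, with no use of extremality. The paper's proof, by contrast, is a proof by contradiction that essentially needs $\lambda\ge\lambda_1(H_d^i)-o(\sqrt d)$ (i.e.\ $G$ being a maximiser, via Claim~\ref{claim:big-eval-fst-i-levels}). It picks the least $t$ with $v_{[l-t-1]}=o(\lambda v_{[l-t]}/l)$, forms partial sums $W_j=\sum_{S\in\A_j}v_S$ over the sets $S\supseteq[l-t]$ at each level, shows $(W_j)$ approximately satisfies a tridiagonal system whose largest eigenvalue is $\lambda_1(H_d^k)$ for some $k<i$, and then contradicts the spectral gap $\lambda_i-\lambda_{i-1}=\Omega(1)$ of Lemma~\ref{lem:calc-evals-ham-balls}. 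Your closing deduction of $\Omega(v_\emptyset/l^i)$ from Claim~\ref{claim:val-of-l} is fine, but it rests on the flawed recursion.
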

        \begin{proof}
            We shall show that $v_{[l - j - 1]} = \Omega (\lam v_{[l - j ]} /
            l)$ for every $0\le j<i$.  Claim \ref{claim:estimate-v-l-i} then
            follows from Claim \ref{claim:val-of-l}.

            Assume that this assertion does not hold and denote $t= \min \{j:
            v_{[l - j - 1]} = o(\lam v_{[l - j]} / l) \}$.  Define for $j
            \in [t]$
            \begin{align*}
                & \A_j=\big\{S\in V(G)\cap [d]^{(l-t+j)}:[l-t]\subseteq S \big\}\\
                & \B_j=\big\{T \in \big(V(G)\cap [d]^{(l-t+j-1)}\big) \setminus
                \A_{j-1}: \text{there exists } S \in \A_j \text{ s.t. }T
            \subseteq S   \big\}\\
                &W_j=\sum\limits_{S\in \A_j}v_S \qquad\qquad U_j=\sum\limits_{S\in \B_j}v_S.
            \end{align*}

            We obtain the following inequalities, using the fact that every
            vertex in $\A_j$ has at most $d - j$ up-neighbours in $\A_{j + 1}$.
            \begin{align*}
            \lam W_j\le 
            \left\{
            \begin{array}{l l}
            W_1+U_0 & j=0\\
            (d-j+1)W_{j-1}+(j+1)W_{j+1}+U_j & 0<j<t\\
            (d-t+1)W_{t-1}+U_t &j=t\\
            \end{array}
            \right.
            \end{align*}

            Define $k=\min \{j: W_{j+1}=o(\lam W_j)\}$.  Clearly $k\le t$ since
            $W_{t+1}=0$ (by the definition of $l$).  Thus
            $W_j=\Omega(\lam^jW_0)$ for $0\le j\le k$.

            Note that $U_0\le lv_{[l-t-1]}=o(\lam v_{[l-t]}) = o(\lam W_0)$, by
            our assumption.  Also, $\lam U_j\ge (j+1)U_{j+1}$ for $0\le j\le t$,
            thus $U_j=O(\lam^jU_0)=o(\lam^{j+1}W_0)=o(\lam W_j)$ for $0\le j\le
            k$.  Hence, the above inequalities can be rewritten as follows.
            \begin{align*}
            \lam W_j\le 
            \left\{
            \begin{array}{l l}
            W_1+o(\lam W_0) & j=0\\
            (d-j+1)W_{j-1}+(j+1)W_{j+1}+o(\lam W_j) & 0<j<k\\
            (d-k+1)W_{k-1}+o(\lam W_k) &j=k\\
            \end{array}
            \right.
            \end{align*}

            Denote $W=(W_0,\ldots, W_k)^T$, and let $A$ be the matrix with the
            above coefficients with the $o(\lam W_j)$ terms dropped.  The above
            inequalities translate to $\lam W_j\le (AW)_j+o(\lam W_j)$.  We
            obtain the following chain of inequalities, where $X_j=o(\lam W_j)$.
            \begin{align*}
            \lam_1(A)\ge 
            \frac{\langle AW,W\rangle}{\langle W,W \rangle } \ge \frac{\langle \lam W,W\rangle}{\langle W,W \rangle }-\frac{\langle X,W\rangle}{\langle W,W \rangle } =\lam -o(\lam)\ge \lam_1(H_d^i)-o(\sqrt{d}).
            \end{align*}

            However, $\lam_1(A)=\lam_1(H_d^{k})$, thus we obtained
            $\lam_1(H_d^{i})-\lam_1(H_d^k)=o(\sqrt{d})$ for some $0 \le k < i$,
            which is a contradiction to Lemma \ref{lem:calc-evals-ham-balls}.
        \end{proof}

        Claim \ref{claim:estimate-v-l-i} implies that
        $v_{[l-i]}=\omega(v_{[l-i-1]}\lam /l)$ (otherwise, $v_{[l - i]} >
        v_{\emptyset}$, a contradiction to the assumption that $v$ is
        compressed).  Similarly to the proof of the claim, denote by $X_j$ the
        sum of weights of vertices in the $(l-i+j)^{\text{th}}$ level containing $[l-i]$.
        By the same arguments we obtain a contradiction unless $X_j=\Omega(\lam
        X_{j-1})$ for $1\le j\le i$.  Thus $X_i=\Omega(X_0 \lam^i)=\Omega
        (v_\emptyset \lam^i/\log ^id)$.  Recall that by Claim \ref{claim:theta},
        there are at most $O(d^{i-\theta})$ vertices in $V\cap [d]^{(l)}$.
        Hence, using the fact that $v$ is compressed, $v_{[l]}=
        \Omega(d^{-i/2+\theta/2})$.  By Claim \ref{claim:val-of-l}, this implies
        that
        $v_{[l-i]}=\Omega(v_{[l]}d^{\theta/2}/\log^id)=\omega(v_\emptyset)$,
        contradicting the assumption that $v$ is compressed.  Hence we cannot
        have $l>i$, so $G=H_d^i$, as required.  This proves that $H_d^i$
        maximises $\lam_1$ among subgraphs of the cube with the same number of
        vertices, thus completing the proof of our second main result, Theorem
        \ref{thm:fixed-radius}.
    \end{proof}

\section{Conclusion} \label{sec:conclusion}
    The question of characterising the subgraphs of the cube maximising $\lam_1$ is
    far from being completely answered.  For radii tending to infinity with the
    dimension of the cube, our results as well as  Samorodnitsky's results
    \cite{samo} only show that the Hamming balls have largest eigenvalues which
    are asymptotically largest among subgraphs of the same order.  We believe
    that, similarly to Theorem \ref{thm:star}, the Hamming balls maximise the
    maximal eigenvalues exactly rather than just asymptotically, for large $d$ and a
    large range of radii.
    \begin{question}
        Is it true that if $d/2 - i$ is sufficiently large, then $H_d^i$
        maximises $\lam_1$ among subgraphs of $Q_d$ with the same number of
        vertices?
    \end{question}

    We point out that for radii that are very close to $d/2$ the Hamming ball
    does not achieve the largest maximal eigenvalue, as can be seen by the following
    example.
    \begin{ex}
        Assume that $d$ is even and consider the Hamming ball of radius $d/2-1$,
        $H=H_d^{d/2-1}$.  We show that $\lambda_1(H)=d-2$.  Put $\lambda=d-2$
        and let $x$ be the vector with weight $x_i = 1-2i/d$ on the vertices of
        level $i$.  The following can be easily verified.
        \begin{align*}
        \lambda x_i=
        \left\{
        \begin{array}{l l}
        dx_1 & i=0\\
        ix_{i-1}+(d-i)x_{i+1} & 0<i<d/2-1\\
        ix_{i-1} &i=d/2-1\\
        \end{array}
        \right.
        \end{align*}
        Thus we have $A(H)x=(d-2)x$.  Since all the weights $x_i$ are positive,
        this implies that $\lam_1(H)=d-2$.  Note that
        $|H|=2^{d-1}(1-\Theta(1/\sqrt{d}))>2^{d-2}$. Thus, since the largest
        eigenvalue of the subcube of dimension $d-2$ is $d-2$, we can achieve a
        larger maximal eigenvalue with a (connected) subgraph with $|H|$
        vertices containing the subcube of dimension $d-2$.

        Similarly, it can be shown that if $i = d/2 - \sqrt{d}$ is an integer,
        then $\lam_1(H_d^i) = d - 4$. Since $|H_d^i| > 2^{d - 4}$, also in this
        case the largest eigenvalue of the Hamming ball is not maximal among
        subgraphs of the cube of the same order.
    \end{ex}

    As seen by this example, it may be interesting to consider subgraphs whose
    largest eigenvalue is very close to $d$. For instance, determining the range
    of radii for which the Hamming balls maximise the maximal eigenvalue,
    especially for large radii, seems like a challenging problem.  The following
    weaker problem also seems hard. Is it true that for every fixed $c>0$, if a
    subgraph $H$ of $Q_d$ has $\lam_1(H)\ge d-c$, then $|H|= \Omega(2^{d})$?

    \bibliography{eigenvalue}
    \bibliographystyle{amsplain}
    
\end{document}